\theoremstyle{plain}
\newtheorem{lemma}{Lemma}[section]
\newtheorem{prop}[lemma]{Proposition}
\newtheorem{thm}[lemma]{Theorem}
\newtheorem{fact}[lemma]{Fact}
\newtheorem{lem}[lemma]{Lemma}
\newtheorem{cor}[lemma]{Corollary}
\newtheorem{question}[lemma]{Question}
\theoremstyle{definition}
\newtheorem{defin}[section]{Definition}
\begin{document}

\title{Base tree property}

\author{Bohuslav Balcar}
\address{The Center for Theoretical Study, Charles University in Prague, 
Jilsk\' a 1, Prague, Czech republic}
\email{balcar@cts.cuni.cz}

\author{Michal Doucha}
\address{Institute of mathematics, Academy of Sciences of the Czech republic, 
\v Zitn\'a 25, Prague, Czech republic}
\email{m.doucha@post.cz}
\thanks{The research of the second author was partially supported by NSF grant DMS 0801114 and grant IAA100190902 of Grant Agency of the Academy of Sciences of the Czech
Republic}

\author{Michael Hru\v{s}\'{a}k}
\address{Instituto de Matem\'aticas, UNAM, Apartado Postal 61-3, Xangari, 58089, Morelia, Michoac\'an, M\'exico.}
\email{michael@matmor.unam.mx}
\thanks{The research of the third author was partially supported by PAPIIT grant IN102311 and
CONACYT grant 177758}

\keywords{Forcing, Boolean algebras, Base tree}
\subjclass[2000]{03E15, 03E17, 03E05, 03E35}

\begin{abstract} Building on previous work from \cite{BPS} we investigate $\sigma$-closed partial orders of size continuum. We provide both an internal and external characterization of such partial orders by showing that (1) every  $\sigma$-closed partial order of size continuum has a base tree and that (2) $\sigma$-closed forcing notions of density $\mathfrak c$ correspond exactly to regular suborders of the
collapsing algebra $Coll(\omega_1, 2^\omega)$.

We further study some naturally ocurring examples of such partial orders.

\end{abstract}

\maketitle
\section*{Introduction}

A partially ordered set  $(P,\leq)$ is \emph{$\sigma$-closed} if every countable decreasing sequence of elements of $P$ has a lower bound. In this note we study
$\sigma$-closed partial orders of size continuum. Orders of this type naturally arise in combinatorial and descriptive set-theory, topology and analysis. 

An essential example is the \emph{collapsing algebra} $Coll(\omega_1, 2^\omega)$, i.e. the completion, in the sense of Boolean algebra, of the complete binary tree of height $\omega_1$. This forcing notion has several
presentations: 
\begin{itemize}
\item $(\mathrm{Fn}(\omega _1,\{0,1\},\omega _1),\supseteq)$ - ordering for adding a new subset of $\omega _1$, 
\item $(\mathrm{Fn}(\omega _1,\mathbb{R},\omega _1),\supseteq)$ - ordering for the consistency of the continuum hypothesis, 
\item $(\mathrm{Fn}(2^{\omega},\{0,1\},\omega _1),\supseteq)$ - ordering for adding $\mathfrak{c}$-many subsets of $\omega _1$, 
\item the natural ordering for adding a $\diamond$-sequence, 
\item Jech's forcing for adding a Suslin tree by countable conditions. 
\end{itemize}

All these orderings are forcing equivalent, in fact, they have isomorphic base trees (see Theorem \ref{BTtheorem} for the term base tree). We refer to \cite{Ku} for definitions of these orderings.

Consider now the set $[\omega]^\omega$ of all infinite sets of natural numbers ordered by inclusion. This order is not $\sigma$-closed, but
it is also not \emph{separative}.\footnote{Recall that a partial order $P$ is separative if whenever $p,q$ are elements of $P$ such that $p\not\leq q$, there is an $r\in P$ such that $r\leq p$ and $r\perp q$.} The separative quotient of 
$([\omega]^\omega, \subseteq)$ are the positive elements in the Boolean algebra
$\mathcal P(\omega)/fin$. In \cite{BPS} the surprising fact that also 
$\mathcal P(\omega)/fin$ has a base tree was established. It was then studied in \cite{Dord}, \cite{Dow2}, \cite{Wil}.

Since then many other naturally occuring examples were studied (\cite{BHH},\cite{BH}) and in each case the methods of \cite{BPS} were used to prove the corresponding Base Tree Theorem. 

In this note we prove this general fact for all partial orders with a dense $\sigma$-closed subset of size continuum. We also identify
the $\sigma$-closed forcings of size continuum as the regular subalgebras of the
collapsing algebra $Coll(\omega_1, 2^\omega)$.

We then present some of the standard examples and review the relevant published results.

We note that similar but somewhat more general notions were studied in \cite{Ko}.

\section{Main results}

The \emph{height} of a partial order $(P,\leq)$, $\mathfrak{h}(P)$ shortly, is the minimal cardinality of a system of open dense subsets of $P$ such that the intersection of the system is not dense. An equivalent definition involves maximal antichains: $\mathfrak{h}(P)$ is equal to the minimal cardinality of a system of maximal antichains from $P$ that do not have a common refinement.
For a Boolean algebra $B$ we define $\mathfrak{h}(B)$ as the height of the ordering $(B\setminus \{0\},\leq)$, where $\leq$ is the canonical ordering on $B$. If $B$ is complete, it coincides with its \emph{distributivity number}. Recall that the distributivity number of $B$ is the least cardinal $\kappa$ such that there exists a matrix $\langle u(\alpha,\beta):\beta\in I_\alpha,\alpha<\kappa\rangle$ of elements from $B$ such that $\bigwedge_{\alpha<\kappa} \bigvee _{\beta\in I_\alpha} u(\alpha,\beta)\neq\bigvee _{f\in \prod_{\alpha<\kappa} I_\alpha}\bigwedge _{\alpha<\kappa} u(\alpha,f(\alpha))$.
We will deal mostly with non-atomic orderings but for completeness we allow atomic orderings in the definition too. Thus, if $(P,\leq)$ is \emph{atomic}, i.e. there is a set of minimal elements such that every other element is above one of them, then we set $\mathfrak{h}(P)=\infty$. Note that for non-atomic orderings height is always a regular cardinal.

The height is a forcing invariant, that means every dense subset of an ordering has the same height. In particular, $\mathfrak{h}(P)=\mathfrak{h}(\mathrm{RO}(P))$.

\begin{fact}
For an ordering $P$, $\mathfrak{h}(P)$ is the minimal cardinal $\kappa$ such that forcing with $P$ adds a new function from $\kappa$ to ordinals. In particular, forcing with $P$ preserves all cardinals less or equal to $\kappa$.
\end{fact}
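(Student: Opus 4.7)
The plan is to establish the stated minimality in two directions: (1) if $\kappa < \mathfrak{h}(P)$ then forcing with $P$ adds no new function $\kappa \to \mathrm{Ord}$, and (2) $P$ does add a new function with domain $\mathfrak{h}(P)$. The cardinal-preservation clause will follow from (1).

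For (1), given a $P$-name $\dot f$ for a function with domain $\check\kappa$, I will form, for each $\alpha < \kappa$, the set $D_\alpha = \{p \in P : p \text{ decides } \dot f(\alpha)\}$; each $D_\alpha$ is open dense by a routine argument. Because $\kappa < \mathfrak{h}(P)$, the intersection $\bigcap_{\alpha<\kappa} D_\alpha$ is dense. Any $p$ in it decides $\dot f(\alpha)$ for every $\alpha < \kappa$ simultaneously, and hence forces $\dot f$ to coincide with a specific ground-model function. So $1 \Vdash \dot f \in V$.

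For (2), I will pass at the outset to the completion $B = \mathrm{RO}(P)$, which has the same height and is forcing equivalent. The payoff is separativity: in $B$, the relation $c \Vdash a \in \dot G$ is simply $c \leq a$. Fix a family $\{A_\alpha : \alpha < \mathfrak{h}(P)\}$ of maximal antichains with no common refinement, enumerate $A_\alpha = \{a_{\alpha,\beta} : \beta \in I_\alpha\}$, and let $\dot f$ be the $B$-name defined by $\dot f(\alpha) = \beta$ iff $a_{\alpha,\beta} \in \dot G$. If $\dot f$ were forced into $V$, then a maximal antichain $C$ of conditions each deciding $\dot f = \check g_c$ for some $g_c \in V$ would, by separativity, satisfy $c \leq a_{\alpha, g_c(\alpha)}$ for every $\alpha$, placing each $c \in C$ inside $D_\alpha := \{b \in B : \exists \beta\, b \leq a_{\alpha,\beta}\}$ for all $\alpha$ at once. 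Since $C$ is maximal and each $D_\alpha$ is open, $\bigcap_\alpha D_\alpha$ is dense in $B$, yielding the forbidden common refinement, a contradiction.

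Finally, the preservation of cardinals $\leq \mathfrak{h}(P)$ is immediate from (1): a collapse of any $\lambda \leq \mathfrak{h}(P)$ would, in the extension, provide a bijection from some $\mu < \lambda$ onto $\lambda$, i.e.\ a new function $\mu \to \mathrm{Ord}$ with $\mu < \mathfrak{h}(P)$, which (1) forbids. The main obstacle is direction (2), and within it the passage to $\mathrm{RO}(P)$ is essential: it is what turns a forcing statement about $\dot f$ into the Boolean inequality needed to expose the common refinement.
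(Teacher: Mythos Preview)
Your argument is correct and is the standard proof of this folklore fact. Note that the paper itself states this result without proof (it is labelled a \emph{Fact}), so there is no authorial argument to compare against. A couple of minor remarks: in direction (2) your detour through the density of $\bigcap_\alpha D_\alpha$ is not really needed, since the maximal antichain $C$ you produce is already, by construction, a common refinement of the $A_\alpha$; and your conclusion is only that \emph{some} condition forces $\dot f\notin V$, which is exactly what the minimality statement requires but is slightly weaker than ``$1\Vdash\dot f\notin V$'' (the latter would need homogeneity in height, which the paper does not assume here).
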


An ordering $P$ is \emph{homogeneous in $\mathfrak{h}$ (homogeneous in height)} if for every $p\in P$ $\mathfrak{h}(\downarrow p)=\mathfrak{h}(P)$.
The following proposition shows that every partial order can be decomposed into
factors homogeneous in height. For complete Boolean algebras there is a canonical such decomposition.

\begin{prop}
Let $B$ be a complete Boolean algebra. Then $B\cong\prod _{b\in I} B\upharpoonright b$, where $I$ is a partition of unity and $B\upharpoonright b$ is homogeneous in height for every $b\in I$.

Moreover, $\mathfrak{h}(B\upharpoonright a)\neq \mathfrak{h}(B\upharpoonright b)$ if $a\neq b$ for $a,b\in I$.
\end{prop}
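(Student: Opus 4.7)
The plan is to identify, for each regular cardinal $\kappa$, the element $b_\kappa \in B$ whose restriction is homogeneous of height $\kappa$, and to prove that $\{b_\kappa\}$ is a partition of unity.

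The starting point is a monotonicity lemma: for $r, q \in B^+$ with $r \leq q$, one has $\mathfrak{h}(B\upharpoonright r) \geq \mathfrak{h}(B\upharpoonright q)$. This follows from the isomorphism $B\upharpoonright q \cong (B\upharpoonright r) \times (B\upharpoonright(q-r))$ together with the standard fact (immediate from the characterization of $\mathfrak{h}$ in the preceding Fact) that the distributivity number of a product of complete Boolean algebras is the minimum of the distributivity numbers of the factors.

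The crucial step is the density in $B^+$ of the set $H$ of elements $q$ such that $B\upharpoonright q$ is homogeneous in height. Given $p \in B^+$, set $\kappa_0 := \mathfrak{h}(B\upharpoonright p)$ and
\[
E := \{s \in B^+ : s \leq p,\ \mathfrak{h}(B\upharpoonright s) > \kappa_0\},
\]
which is downward closed by monotonicity. Let $e := \bigvee E$. Were $e$ equal to $p$, a maximal antichain of $E$ below $p$ would have join $p$ (by density of $E$ below $e$) and would present $B\upharpoonright p$ as a product of factors each of height $> \kappa_0$; since heights are regular cardinals, each factor would have height $\geq \kappa_0^+$, forcing $\mathfrak{h}(B\upharpoonright p) \geq \kappa_0^+$ and contradicting the definition of $\kappa_0$. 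Hence $p - e \neq 0$, and any nonzero $r \leq p - e$ satisfies $\mathfrak{h}(B\upharpoonright r) = \kappa_0$: monotonicity gives $\geq$, while strict inequality would place $r$ in $E$ and hence below $e$, contradicting $r \leq p - e$. Thus $p - e$ is the desired homogeneous descendant of $p$.

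With density of $H$ in hand, set $H_\kappa := \{q \in B^+ : B\upharpoonright q \text{ is homogeneous of height } \kappa\}$ and $b_\kappa := \bigvee H_\kappa$. Pairwise disjointness is forced by homogeneity: if $b_\kappa \wedge b_\lambda \neq 0$ for $\kappa \neq \lambda$, successive refinement first by an element of $H_\kappa$ and then by an element of $H_\lambda$ yields a nonzero element simultaneously homogeneous of heights $\kappa$ and $\lambda$, absurd. The identity $\bigvee_\kappa b_\kappa = 1$ follows from density of $H$, since any nonzero complement would contain a homogeneous descendant lying in some $H_\kappa$ and hence below $b_\kappa$. Finally $B\upharpoonright b_\kappa$ is itself homogeneous of height $\kappa$: $H_\kappa$ is dense below $b_\kappa$, so a maximal antichain of $H_\kappa$-elements exhibits $B\upharpoonright b_\kappa$ as a product of factors of height $\kappa$, and for any nonzero $r \leq b_\kappa$, picking $q \in H_\kappa$ with $r \wedge q \neq 0$ and applying monotonicity in both directions gives $\mathfrak{h}(B\upharpoonright r) = \kappa$. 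Distinct $b_\kappa$'s have distinct heights by construction.

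The main obstacle is the density of $H$ in the third paragraph; the trick is to recognize the homogeneous descendant of $p$ as the explicit element $p - \bigvee\{s \leq p : \mathfrak{h}(B\upharpoonright s) > \mathfrak{h}(B\upharpoonright p)\}$, with its nonvanishing forced by the simple observation that any nonempty set of regular cardinals strictly greater than $\kappa_0$ has minimum $\geq \kappa_0^+$.
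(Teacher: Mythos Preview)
Your proof is correct and takes a genuinely different route from the paper's. The paper proceeds by transfinite recursion: it first splits off the atomic part, then takes the set $A_1$ of elements below which a fixed family of $\mathfrak{h}(B_0)$ many open dense sets has empty intersection, argues directly that each such element is homogeneous of height $\mathfrak{h}(B_0)$, sets the first nontrivial factor to be $\bigvee A_1$, and iterates on the complement. Your argument avoids the recursion entirely by first isolating the monotonicity lemma $r\leq q \Rightarrow \mathfrak{h}(B\upharpoonright r)\geq\mathfrak{h}(B\upharpoonright q)$, then giving the closed-form description $p - \bigvee\{s\leq p:\mathfrak{h}(B\upharpoonright s)>\mathfrak{h}(B\upharpoonright p)\}$ of a homogeneous element below an arbitrary $p$, and finally defining all the $b_\kappa$ simultaneously as joins of the homogeneous elements of each height. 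Your approach is more streamlined and treats the atomic part uniformly (as the $\kappa=\infty$ piece) rather than as a separate preliminary case; the paper's approach, on the other hand, is more hands-on in that it never needs the product formula $\mathfrak{h}(\prod_i B_i)\geq\inf_i\mathfrak{h}(B_i)$ for infinite index sets, working instead with a single explicit family of open dense sets throughout. Both routes ultimately rest on the same monotonicity principle, just packaged differently.
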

\begin{proof} Let $A$ be the set of all atoms, then $B\upharpoonright \bigvee A$ is the first factor homogeneous in the height $\infty$.

Next, we work with an atomless complete algebra $B_0=B\upharpoonright (-\bigvee A)$ ($B\cong B_0\times B\upharpoonright \bigvee A$). Let $(D_{\alpha})_{\alpha <\mathfrak{h}(B_0)}$ be the system of open dense subsets of $B_0$ such that $\bigcap _{\alpha <\mathfrak{h}(B_0)} D_{\alpha}$ is not dense. Let $A_1$ be the subset of elements of $B_0$ witnessing the non-density, i.e. $\downarrow a\cap \bigcap _{\alpha <\mathfrak{h}(B_0)} D_{\alpha}=\emptyset$ for every $a\in A_1$. We claim that for every $a\in A_1$ $B\upharpoonright a$ is homogeneous in the height (with height $\mathfrak{h}(B_0))$. Assume not, then there is some $a\in A_1$ and $b<a$ such that $\mathfrak{h}(B\upharpoonright b)<\mathfrak{h}(B\upharpoonright a)$. Thus, there is a system $(S_{\alpha})_{\alpha <\mathfrak{h}(B\upharpoonright b)}$ of open dense subsets of $B\upharpoonright b$ with a non-dense intersection below $b$. However, if we set $D_{\alpha}=S_{\alpha}\cup B_0\setminus \downarrow b$ then we get a system of open dense subsets in $B_0$ without a dense intersection less than $\mathfrak{h}(B_0)$, that is a contradiction.

We take the join $\bigvee A_1$ of all elements from $A_1$ and the factor $B\upharpoonright \bigvee A_1$ is homogeneous in height. We continue with the remainder $B_1=B_0\upharpoonright (-\bigvee A_1)$ and in the same way get a set $A_2$ of elements witnessing the non-density of the intersection of a system of open dense subsets of size $\mathfrak{h}(B_1)$. It is possible that $\mathfrak{h}(B_1)=\mathfrak{h}(B_0)$. In this case, we join the elements of $A_2$ with the elements of $A_1$. In the opposite case, $B_1\upharpoonright \bigvee A_2$ is a new factor homogeneous in height.

We continue similarly until we treat all elements of $B$. We end up with the desired decomposition.
\end{proof}

\begin{defin}[Base tree property] An ordering $(P,\leq)$ has the \emph{Base Tree Property} (we shall shortly say it has the {\it BT-property}) if it contains a dense subset $D\subseteq P$ with the following three properties:
\begin{itemize}
\item[-] it is atomless; i.e. for every $d\in D$ there are elements $d_1,d_2\in D$ below $d$ such that $d_1\perp d_2$
\item[-] it is $\sigma$-closed
\item[-] $|D|\leq \mathfrak{c}$
\end{itemize}
\end{defin}

It can be easily seen that assuming the Continuum Hypothesis, all partial orders
with the BT-property are forcing equivalent with $Coll(\omega_1, 2^\omega)$ and,
 consequently have a base tree. In fact, the following is true in ZFC.

\begin{thm}[The base tree theorem]\label{BTtheorem} Let $(P,\leq)$ be an ordering homogeneous in height with the BT-property. Then there are $\mathfrak{h}(P)$ maximal antichains $(T_{\alpha})_{\alpha <\mathfrak{h}(P)} \subseteq P$ such that:
\begin{enumerate}[(i)]
\item $(T=\bigcup _{\alpha <\mathfrak{h}(P)} T_{\alpha}, \geq)$ is a tree of height $\mathfrak{h}(P)$, where $T_{\alpha}$ is the $\alpha$-th level of the tree,
\item each $t\in T$ has $\mathfrak{c}$ immediate successors,
\item $T$ is dense in $P$.
\end{enumerate}
$T$ is called a base tree of $P$.\\
\end{thm}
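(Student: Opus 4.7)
The plan is to build the base tree level-by-level by transfinite recursion of length $\mathfrak{h}(P)$ inside a fixed atomless $\sigma$-closed dense subset $D\subseteq P$ with $|D|\leq\mathfrak{c}$. To make compatibility well-behaved I first pass to the separative quotient of $D$ (lifting representatives at the end preserves incompatibility at each level, so this is harmless). Enumerate $D=\{d_{\xi}:\xi<\mathfrak{c}\}$ and fix a bookkeeping bijection $\mathfrak{c}\simeq\mathfrak{h}(P)\times\mathfrak{c}$, written $\xi\mapsto(\alpha_{\xi},i_{\xi})$, so that each $d_{\xi}$ is assigned a target stage $\alpha_{\xi}$ at which it will be absorbed. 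Throughout I maintain the invariants that $T_{\alpha}\subseteq D$ is a maximal antichain, that $T_{\beta}$ refines $T_{\alpha}$ for $\alpha<\beta$, and that each $t\in T_{\alpha}$ has exactly $\mathfrak{c}$ immediate successors in $T_{\alpha+1}$.

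At a successor stage $\alpha+1$ I work below each $t\in T_{\alpha}$ separately. Using atomlessness, I build a perfect binary tree $(u_{s})_{s\in 2^{<\omega}}\subseteq D$ below $t$ with $u_{\emptyset}=t$ and $u_{s\frown 0}\perp u_{s\frown 1}$; $\sigma$-closure then yields, for each branch $f\in 2^{\omega}$, a lower bound $v_{f}\in D$ of $(u_{f\upharpoonright n})_{n<\omega}$, giving $\mathfrak{c}$ pairwise incompatible elements below $t$. I extend $\{v_{f}:f\in 2^{\omega}\}$ to a maximal antichain $A_{t}$ below $t$ in $D$; its cardinality is still $\mathfrak{c}$. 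To secure density, I modify the construction so that whenever the bookkeeping schedules some $d_{\xi}$ at $(\alpha,t)$ and $d_{\xi}$ is compatible with $t$, a designated node $u_{s}$ is chosen below $d_{\xi}$, so that a corresponding $v_{f}$ ends up $\leq d_{\xi}$. Setting $T_{\alpha+1}:=\bigcup_{t\in T_{\alpha}}A_{t}$ yields a maximal antichain refining $T_{\alpha}$ with the required successor structure.

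At a limit stage $\alpha<\mathfrak{h}(P)$, forcing invariance of height gives $\mathfrak{h}(D)=\mathfrak{h}(P)$, so the family $(T_{\beta})_{\beta<\alpha}$ consists of strictly fewer than $\mathfrak{h}(D)$ maximal antichains in $D$ and hence admits a common refinement in $D$ by the very definition of $\mathfrak{h}$; I take $T_{\alpha}$ to be such a refinement. Homogeneity in height is invoked here to guarantee $\mathfrak{h}(\downarrow t)=\mathfrak{h}(P)$ for every $t$ appearing in the tree, so the recursion cannot stall below any node. For countable cofinality one may instead construct $T_{\alpha}$ concretely by applying $\sigma$-closure to decreasing chains $(t_{n})_{n<\omega}$ with $t_{n}\in T_{\alpha_{n}}$ along a cofinal sequence, which makes the role of $\sigma$-closure transparent.

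Properties (i) and (ii) are immediate from the invariants; (iii) follows because every $d_{\xi}$ is absorbed at stage $\alpha_{\xi}+1$ by the bookkeeping, and density of $D$ in $P$ transfers density of $T$ in $D$ to density in $P$. The main obstacle I anticipate is the successor-step bookkeeping: coordinating, below each $t$, the perfect-tree construction with the scheduled absorptions so that the resulting $\mathfrak{c}$ immediate successors are simultaneously pairwise incompatible, maximal below $t$, and hit all the scheduled $d_{\xi}$'s. This is exactly where separativity of $D$ is essential, allowing one to split a condition compatible with $d_{\xi}$ into a portion below $d_{\xi}$ and a portion incompatible with $d_{\xi}$, preserving the antichain property.
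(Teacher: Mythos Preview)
Your level-by-level recursion, the perfect-tree splitting giving $\mathfrak{c}$ immediate successors, and the limit-stage common refinement are all fine. The genuine gap is in the density argument, and it is tied to a misplacement of where homogeneity in height enters.

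Your bookkeeping schedules each $d_\xi$ at a pair $(\alpha_\xi,i_\xi)$ and proposes to steer a node $u_s$ of the perfect tree below the corresponding $t\in T_{\alpha_\xi}$ to lie under $d_\xi$. Two things go wrong. First, a single perfect tree below $t$ has only countably many nodes $u_s$, while (since $\mathfrak{h}(P)$ may be strictly smaller than $\mathfrak{c}$) each stage must absorb $\mathfrak{c}$-many conditions. Second, and more seriously, you do not control whether the $t$ picked out by $i_\xi$ is compatible with $d_\xi$; when it is not, your clause ``if $d_\xi$ is compatible with $t$'' simply skips $d_\xi$, and nothing guarantees it will ever be scheduled at a compatible node later. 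Separativity does not help here: the obstruction is not splitting below one $t$, but producing, for $\mathfrak{c}$-many unrelated conditions simultaneously, extensions that are pairwise incompatible so that they can all sit inside one maximal antichain.

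Homogeneity in height is not what makes the limit step go through; the existence of a common refinement of fewer than $\mathfrak{h}(P)$ maximal antichains is just the definition of $\mathfrak{h}$ and holds below any node regardless. In the paper's proof homogeneity is used at the outset for a different purpose: starting from open dense sets $(B_\alpha)_{\alpha<\mathfrak{h}(P)}$ with merely non-dense intersection, homogeneity is iterated to force the intersection to be \emph{empty}. One then extracts a refining system of maximal antichains $(D_\alpha)_{\alpha<\mathfrak{h}(P)}$; the empty intersection guarantees that every $p$ is compatible with at least two members of some $C_\alpha$, and the perfect-tree argument, run \emph{through} this refining system, shows that each $p$ is compatible with $\mathfrak{c}$-many members of a single $D_\alpha$. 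Since $D_\alpha$ is an antichain independent of $p$, one can inject $P_\alpha=\{p:\ p\text{ compatible with }\mathfrak{c}\text{-many elements of }D_\alpha\}$ into $D_\alpha$ and pick, for each $p\in P_\alpha$, an element below both $p$ and its assigned member of $D_\alpha$; these are automatically pairwise incompatible and extend to a maximal antichain $R_\alpha$ hitting every $p\in P_\alpha$. Refining the $R_\alpha$'s into the tree is what delivers density. This pre-built system $(D_\alpha)$ with empty intersection is precisely the missing device that makes the $\mathfrak{c}$-many simultaneous absorptions at each stage possible.
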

\begin{proof} We need to work with some dense subset guaranteed by the definition of the BT-property rather than with $P$ itself. To avoid introducing new symbols and sets, we assume $P$ itself has the properties.

We use the definition of height. So we can find a system $(A_{\alpha})_{\alpha<\mathfrak{h}(P)}$ of dense open subsets with a non-dense intersection. However, we need to ensure the intersection to be empty. For this, we will work in the completion $\mathrm{RO}(P)$. Suppose $\bigcap _{\alpha <\mathfrak{h}(P)} A_{\alpha}$ is not empty. Consider the element $a=\bigvee (\bigcap _{\alpha <\mathfrak{h}(P)} A_{\alpha})\in \mathrm{RO}(P)$. Since $P$ is homogeneous in height, also $\mathrm{RO}(P)$ is homogeneous in height, and there is a system $(\bar{A} _{\alpha})_{\alpha <\mathfrak{h}(P)}$ of dense open sets of $P$ below $a$ of the same size such that their intersection is not dense below $a$. We replace $\downarrow a\cap A_{\alpha}$ by $\bar{A} _{\alpha}$ (i.e. $(A_{\alpha}\setminus \downarrow a)\cup \bar{A} _{\alpha}$). We get a new system $(A'_{\alpha})_{\alpha<\mathfrak{h}(P)}$ of dense open subsets of $P$ with a non-dense intersection. If this intersection is again non-empty, we again consider the element $a\geq b=\bigvee (\bigcap _{\alpha <\mathfrak{h}(P)} A'_{\alpha})\in \mathrm{RO}(P)$ and continue similarly. We can repeat this procedure until we get the desired  system $(B_{\alpha})_{\alpha <\mathfrak{h}(P)}$ of dense open subsets with an empty intersection.

Next, we extract from each dense open set $B_{\alpha}$ a maximal antichain $C_{\alpha}$. We claim that for every $p\in P$ there is at least one maximal antichain $C_{\alpha}$ and elements $a,b\in C_{\alpha}$ such that $p$ is compatible with both of them: suppose that for some $p\in P$ and for every $\alpha <\mathfrak{h}(P)$ there is only one element $c_{\alpha}$ from $C_{\alpha}$ that is compatible with $p$. However, $p$ is then, in fact, below $c_{\alpha}$ (since if $p\nleq c_{\alpha}$ then there is a $p_0\leq p$ that is disjoint with $c_{\alpha}$ but necessarily compatible with another element of $C_{\alpha}$). This means that $p\in \bigcap _{\alpha <\mathfrak{h}(P)} \downarrow C_{\alpha} \subseteq \bigcap _{\alpha <\mathfrak{h}(P)} B_{\alpha}$ and that is a contradiction with the fact that the intersection is empty.

Before constructing the levels of $T$ we modify the antichains into a system $(D_{\alpha})_{\alpha <\mathfrak{h}(P)}$ where $D_{\beta}$ refines $D_{\alpha}$ if $\alpha <\beta$. This can be easily done if we set $D_{\alpha}$ to be a common refinement of $(C_{\gamma})_{\gamma \leq \alpha}$ and  $(D_{\gamma})_{\gamma < \alpha}$.

The levels of the tree $T$ will be maximal antichains. What we need to take care of is to ensure that $T$ is dense and that every element of $T$ has $\mathfrak{c}$ immediate successors. We begin by showing that for each element $p\in P$ there is an antichain $D_{\alpha}$ with $\mathfrak{c}$-many elements compatible with $p$. There is some $D_{\alpha _0}$ and elements $d_0,d_1\in D_{\alpha _0}$ compatible with $p$, i.e. there are elements $p_0\leq d_0,p_1\leq d_1$ below $p$. Then again there is some $D_{\alpha _1}$ and elements $d_{00},d_{01},d_{10},d_{11}\in D_{\alpha _1}$, the first two compatible with $p_0$, the last two with $p_1$ (note that this is the place where we need the antichains to be refining; since in general there would be some $D_{\beta _1}$ with compatible elements with $p_0$ and some $D_{\beta _2}$ with compatible elements with $p_1$ but in our case we can take $\alpha _1$ to be $\sup \{\beta _1,\beta _2\}$). We again get $p_{\zeta}\leq p$ for each $\zeta \in {}^2\{0,1\}$. We continue until we get an appropriate $p_{\zeta}\leq p$ for each $\zeta \in {}^{<\omega}\{0,1\}$. For every $\xi \in {}^{\omega}\{0,1\}$ we have a descending chain $p\geq p_{\xi \upharpoonright \{0\}}\geq \ldots p_{\xi \upharpoonright n}\geq \ldots$ with a lower bound $p_{\xi}$ (due to $\sigma$-closedness). Moreover, $p_{\xi _1}\perp p_{\xi _2}$ for $\xi _1\neq \xi _2$. Thus we see that there is a maximal antichain of size $\mathfrak{c}$ below $p$; we denote it $\mathcal{A}(p)$. Each such $p_{\xi}$ is compatible with some element $d_{\xi}$ of $D_{\alpha}$ where $\alpha =\sup \{\alpha _n:n\in \omega\}$. And again $\xi _1\neq \xi _2$ implies $d_{\xi _1}\neq d_{\xi _2}$.

Let $P_{\alpha}=\{p\in P:p$ is compatible with $\mathfrak{c}$-many elements of $D_{\alpha}\}$. We see that $P=\bigcup _{\alpha <\mathfrak{h}(P)} P_{\alpha}$. Since $|P_{\alpha}|\leq \mathfrak{c}$ for each $\alpha$ there is an injective mapping $f_{\alpha}:P_{\alpha}\rightarrow {}^{\omega}2$ such that $p_{f_{\alpha}(p)}\leq p$ for every $p \in P_{\alpha}$, where $p_{f_{\alpha}(p)}$ is from the construction above. For each $\alpha$ for which $P_\alpha$ is non-empty we complete the antichain $\{p_{f_{\alpha}(p)}:p\in P_{\alpha}\}$ into a maximal antichain $R_\alpha$.

Now we are ready to start the construction of the base tree. We set $T_0=D_0$ and for each $\alpha +1$ we set $T_{\alpha +1}$ to be the common refinement of $D_{\alpha +1}$, $\mathcal{A}(p)$ for all $p\in T_{\alpha}$ and $R_\alpha$ if it exists. For $\alpha$ limit, $T_{\alpha}$ is a common refinement of $(T_{\gamma})_{\gamma <\alpha}$.

Note that by refining $\mathcal{A}(p)$ for all $p\in T_{\alpha}$ we ensure that each element of the tree has $\mathfrak{c}$-many immediate successors and by refining $R_\alpha$'s that $T$ is dense. This finishes the proof.
\end{proof}

\begin{cor}\label{baselem} For an ordering $(P,\leq)$, the following statements  are equivalent:
\begin{enumerate}[(i)]
\item $P$ has the BT-property,
\item $P$ has a dense subset with the BT-property,
\item Every dense subset of $P$ has the BT-property,
\item $\mathrm{RO}(P)$ has the BT-property.

\end{enumerate}
\end{cor}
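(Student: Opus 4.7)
The plan is to close the cycle $\mathrm{(iii)} \Rightarrow \mathrm{(i)} \Rightarrow \mathrm{(ii)} \Rightarrow \mathrm{(iv)} \Rightarrow \mathrm{(iii)}$, in which the first three implications are bookkeeping and only the last carries real content. Since $P$ is itself a dense subset of $P$, the implications $\mathrm{(iii)} \Rightarrow \mathrm{(i)}$ and $\mathrm{(i)} \Rightarrow \mathrm{(ii)}$ are immediate. For $\mathrm{(ii)} \Rightarrow \mathrm{(iv)}$: given a dense $Q \subseteq P$ with internal BT-witness $D \subseteq Q$, transitivity of density makes $D$ dense in $\mathrm{RO}(P)$, while the internal properties of $D$ (atomlessness, $\sigma$-closedness, size at most $\mathfrak{c}$) are unchanged, so $D$ also witnesses the BT-property of $\mathrm{RO}(P)$.

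The substantive step is $\mathrm{(iv)} \Rightarrow \mathrm{(iii)}$, which I would reduce to the following transfer lemma: \emph{if a poset $E$ has the BT-property and $Q \subseteq E$ is dense, then $Q$ has the BT-property as well}. Any dense $Q \subseteq P$ is automatically dense in $\mathrm{RO}(P)$, so the lemma applied with $E = \mathrm{RO}(P)$ gives (iii).

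To prove the transfer lemma, let $D \subseteq E$ be a BT-witness. One builds $D' \subseteq Q$ recursively as a tree, maintaining alongside an auxiliary map $\delta: D' \to D$ satisfying the invariants $\delta(q) \leq q$ and \emph{monotonicity along the tree} (if $q'$ is a descendant of $q$ in $D'$ then $\delta(q') \leq \delta(q)$). At a successor step, atomlessness of $D$ below $\delta(q)$ furnishes incompatible $d_1, d_2 \in D$ under $\delta(q)$; density of $Q$ in $E$ then yields $q_i \in Q$ with $q_i \leq d_i$, and $\delta(q_i) \in D$ is chosen below $q_i$ by density of $D$ in $E$. The monotonicity invariant is preserved because $\delta(q_i) \leq q_i \leq d_i \leq \delta(q)$. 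At a countable limit $\alpha$, the chain $\{\delta(q_\beta)\}_{\beta<\alpha}$ is decreasing in $D$, so $\sigma$-closedness supplies a lower bound $d^* \in D$; any $q_\alpha \in Q$ below $d^*$ (which exists by density of $Q$ in $E$) serves as a lower bound of the branch, and $\delta(q_\alpha)$ is again any $D$-element below $q_\alpha$. Density of $D'$ in $Q$ is arranged by enumerating a dense subset of $Q$ of size $\leq \mathfrak{c}$ (for example, pick a $Q$-element below each $d \in D$) and ensuring every such element acquires a descendant in $D'$ at some stage of the recursion.

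The hard part is precisely the coordination at limit stages: the invariant $\delta(q) \leq q$ is the mechanism that transports the $\sigma$-closedness of $D$ into $D'$ without requiring $Q$ itself to be $\sigma$-closed. A standard tally (each level of size $\leq \mathfrak{c}$, height $\omega_1$ sufficing for $\sigma$-closedness) keeps $|D'| \leq \mathfrak{c}$, and atomlessness of $D'$ follows directly from the successor step producing incompatible children below every node.
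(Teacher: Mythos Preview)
Your cycle and the transfer-lemma reduction are the same as the paper's: it also proves (ii)$\Rightarrow$(iii) by taking a given dense set and manufacturing a $\sigma$-closed dense subset inside it via interleaving with a fixed BT-witness. Your $\delta$-map is exactly the paper's ``interwined chain'' device, just phrased pointwise rather than with alternating maximal antichains $P_\alpha\subseteq T$ and $S_\alpha\subseteq D$.

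The substantive difference is that the paper does \emph{not} build directly from the raw BT-witness $D$; it first invokes Theorem~\ref{BTtheorem} to get a base tree $T$ of height $\mathfrak h(P)$ that is already dense, and then lets the levels of the new set refine the levels $T_\alpha$. Density of the constructed set is then immediate: any $q$ lies above some $t\in T_\alpha$, hence above some element of $S_\alpha$. This is precisely the step your sketch does not secure. You assert that density is ``arranged by enumerating a dense subset of $Q$ of size $\le\mathfrak c$ and ensuring every such element acquires a descendant in $D'$ at some stage,'' but with only $\omega_1$ stages and $\mathfrak c$ targets this needs real bookkeeping, and more importantly it collides with your own invariant: to give a node $p$ a child below a prescribed $q_d$ you need $q_d$ compatible with $\delta(p)$, not merely with $p$, otherwise you cannot choose the child's $\delta$-value below $\delta(p)$ and the monotonicity that powers your limit step breaks. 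Repairing this forces you either to make the $\delta$-image itself dense (i.e.\ to rebuild a base tree inside $D$) or to run the $R_\alpha$-style matching argument from the proof of Theorem~\ref{BTtheorem}. Either way you are re-proving the part of the base tree theorem you tried to avoid; the paper simply quotes it.

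So: the $\sigma$-closedness half of your transfer lemma is fine and matches the paper, but the density half has a genuine gap. The cleanest fix is to do what the paper does---apply Theorem~\ref{BTtheorem} first, run the construction to height $\mathfrak h(P)$, and let the base tree carry the density.
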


\begin{proof}
Note that (i)$\Rightarrow$(ii) and (iii)$\Rightarrow$(iv) follow from the definition. It suffices to prove (ii)$\Rightarrow$(iii), (iv)$\Rightarrow$(i) is then a consequence.

We need to find a dense subset of a given dense subset that is, moreover, $\sigma$-closed and of size $\mathfrak{c}$, atomlessness is clear.

Assuming (ii), we have a base tree $T$, we are given a dense subset $D$ and we show that there is a $\sigma$-closed dense subset $S\subseteq D$.

We make $S$ from maximal antichains. For every $t\in T_0$ we find a maximal antichain $A_t\subseteq D$ below the element $t$. $\bigcup _{t\in T_0} A_t$ is the first maximal antichain $S_0$.

Then for every $s\in S_0$ we find a maximal antichain $M_s\subseteq T$ below $s$. Let $P_1\subseteq T$ be a maximal antichain from $T$ refining $\bigcup _{s\in S_0} A_s$ and $T_1$. Again, for every $p\in P_1$ we find a maximal antichain $A_p\subseteq D$ from $D$, the union $\bigcup _{p\in P_1} A_p$ is $S_1$.

Isolated steps are treated similarly. We need not omit $P_{\alpha}$ to be refining the tree level $T_{\alpha}$. Then we refine it to $S_{\alpha}\subseteq D$.

For a limit $\alpha$ we take a refinement $P_{\alpha}$ of all $P_{\beta}$'s for $\beta <\alpha$ (which is also a refinement of $S_{\beta}$'s) together with $T_{\alpha}$. Then we again refine it to $S_{\alpha}\subseteq D$.

The resulting set $S=\bigcup _{\alpha <\mathfrak{h}(P)} S_{\alpha}$ is dense and $\sigma$-closed. We ensured density by refining all levels of $T$. For $\sigma$-closedness observe that for every countable descending chain $s_0\geq s_1\geq \ldots$ from $S$, where $s_n\in S_{\alpha _n}$, there is an interwined descending chain $p_0\geq p_1\geq \ldots$ such that $p_0\geq s_0\geq p_1\geq s_1\geq \ldots$, where $p_n\in P_{\alpha _n}$. This interwined chain has a lower bound $p$ in $P_{\alpha}$, where $\alpha =\sup \{\alpha _n: n\in \omega \}$, and $p$ has some successor $s\in S$.
\end{proof}

In other words, having a $\sigma$-closed dense set is preserved by forcing equivalence among separative partial orders of size continuum. On the other hand, Zapletal in \cite{Zap} has constructed  a model in which the Continuum Hypothesis holds  and there are two forcing equivalent separative partial orders of size $\aleph_2$ one $\sigma$-closed and the other without a $\sigma$-closed dense set. One has to wonder whether
such a pair exists in ZFC.
\begin{question}
Are there, in ZFC, two separative partial orders which are forcing equivalent such that one is $\sigma$-closed and the other does not have a $\sigma$-closed dense set? Can such partial orders be found as dense subsets of the completion of $\mathrm{Fn}(\omega_1,\mathfrak{c}^+)$?
\end{question}

Finally, using this internal characterization of the partial orders with the BT-property one can easily deduce the following external characterization.

\begin{thm}\emph{}
\begin{enumerate}
\item Let $(P,\leq)$ be an ordering with the BT-property. Then $\mathrm{RO}(P)$ is a regular subalgebra of  $Coll(\omega _1,\mathfrak{c})$.
\item Let $B$ be a complete atomless regular subalgebra of $Coll(\omega _1,\mathfrak{c})$. Then $B$ has the BT-property. 

\end{enumerate}

\end{thm}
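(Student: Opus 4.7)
For (2) I would verify the three clauses of the BT-property for $B$ one by one. Atomlessness is in the hypothesis. Since $B$ is a complete subalgebra of $Coll(\omega_1,\mathfrak{c})$, every countable meet computed inside $B$ agrees with the same meet computed in $Coll(\omega_1,\mathfrak{c})$, so $\sigma$-closedness of the collapse descends to $B$. For density at most $\mathfrak{c}$, observe that $Coll(\omega_1,\mathfrak{c})$ contains the dense set $\mathfrak{c}^{<\omega_1}$, which has size $\mathfrak{c}^\omega=\mathfrak{c}$; projecting it onto $B$ via the canonical complete-subalgebra projection gives a dense subset of $B$ of size at most $\mathfrak{c}$. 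To upgrade this to a $\sigma$-closed dense subset of the same cardinality, I would iterate, over $\omega_1$ steps, the operation $D_{\alpha+1}=D_\alpha\cup\bigl\{\bigwedge S : S\in[D_\alpha]^\omega,\ \bigwedge S\neq 0\bigr\}$, with unions at limits. Because $\mathfrak{c}^\omega=\mathfrak{c}$, each $D_\alpha$ has size at most $\mathfrak{c}$, and the union $D_{\omega_1}$ is $\sigma$-closed since every descending $\omega$-chain lies entirely inside some $D_\alpha$ with $\alpha<\omega_1$.

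For (1), after using Corollary~\ref{baselem} to replace $P$ by a $\sigma$-closed dense subset of size $\mathfrak{c}$, I would consider the two-step iteration $Q=P*\dot{Coll}(\omega_1,\mathfrak{c})$. Since $\sigma$-closed $P$ adds no reals, $\mathfrak{c}^{V^P}=\mathfrak{c}^V$, so the second factor is a $Coll(\omega_1,\mathfrak{c})$ of density $\mathfrak{c}$ in $V^P$. Thus $Q$ is $\sigma$-closed, atomless, of density $\mathfrak{c}$, and collapses $\mathfrak{c}$ to $\omega_1$ via its second factor; these properties persist below every condition, so $Q$ is homogeneous in height with $\mathfrak{h}(Q)=\omega_1$. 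Theorem~\ref{BTtheorem} applied to $Q$ then produces a base tree of height $\omega_1$ with $\mathfrak{c}$ immediate successors at every node, matching abstractly the canonical base tree $\mathfrak{c}^{<\omega_1}$ of $Coll(\omega_1,\mathfrak{c})$.

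The main obstacle is to promote this shape coincidence into a forcing equivalence. Both base trees have height $\omega_1$, branch into $\mathfrak{c}$ at every node, and admit no maximal branch of length $<\omega_1$ — the last because $\sigma$-closedness supplies a lower bound for every countable descending chain, and density of the base tree in the forcing realises that bound inside the tree. A standard back-and-forth of length $\mathfrak{c}$ on these two trees, matching fresh immediate successors on either side and using the extension property at limit levels, produces a tree isomorphism that extends canonically to an isomorphism $\mathrm{RO}(Q)\cong Coll(\omega_1,\mathfrak{c})$ of regular open completions. Since $P$ is a regular subalgebra of $\mathrm{RO}(Q)$ via the canonical first-coordinate embedding of the iteration, $\mathrm{RO}(P)$ regularly embeds in $Coll(\omega_1,\mathfrak{c})$, which is (1).
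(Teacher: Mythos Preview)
Your argument for (2) has a genuine gap. Closing a dense set under nonzero countable meets produces a set \emph{closed under taking nonzero countable meets}, not a $\sigma$-closed set: $\sigma$-closedness requires every countable descending chain to \emph{have} a nonzero lower bound, and your iteration does nothing to exclude chains whose meet in $B$ is $0$. Such chains do exist: the complete Boolean algebra $Coll(\omega_1,\mathfrak c)$ is not itself $\sigma$-closed (for instance, $W_n=[0^n]\wedge\neg[0^\omega]$ in $\mathrm{RO}(2^{<\omega_1})$ is a descending $\omega$-chain with meet $0$), so the sentence ``$\sigma$-closedness of the collapse descends to $B$'' conflates the $\sigma$-closedness of the dense tree $\mathfrak c^{<\omega_1}$ with a property of the whole algebra. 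What \emph{does} descend to complete subalgebras is game-closedness (strategic $\sigma$-closedness), and this is exactly how the paper proceeds: it invokes Veli\v{c}kovi\'c's theorem that an atomless game-closed complete Boolean algebra of density at most $\mathfrak c$ has a $\sigma$-closed dense subset of size at most $\mathfrak c$.

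Your route to (1) is essentially the paper's --- form a product (paper) or iteration (you) with $\mathrm{Fn}(\omega_1,2,\omega_1)$, note the result has the BT-property with height $\omega_1$, conclude it is forcing equivalent to $Coll(\omega_1,\mathfrak c)$, and read off the regular embedding from the first factor. The paper simply asserts the forcing equivalence; you attempt to prove it by a back-and-forth tree isomorphism, and that step does not go through. In $\mathfrak c^{<\omega_1}$ every cofinal countable branch has a \emph{unique} node at the limit level extending it, whereas in a base tree produced by Theorem~\ref{BTtheorem} the limit level is only a maximal antichain refining the earlier levels, and a single branch will typically have many (up to $\mathfrak c$) extensions there --- one cannot in general choose just one extension per branch and still obtain a maximal antichain. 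So the two trees are not isomorphic as trees, and no level-preserving back-and-forth can produce an isomorphism. The correct justification is the folklore characterization of $Coll(\omega_1,\mathfrak c)$ as the unique completion of a separative $\sigma$-closed poset of size $\mathfrak c$ that collapses $\mathfrak c$ to $\omega_1$; the base tree visibly collapses $\mathfrak c$ (its levels encode a generic surjection $\omega_1\to\mathfrak c$), so this applies.
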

\begin{proof}
We prove (1): Let $D\subseteq P$ be its dense subset witnessing the BT-property. Then $D\times \mathrm{Fn}(\omega _1,\{0,1\},\omega _1)$ with induced product ordering clearly has the BT-property, the height is $\omega _1$, thus it is forcing equivalent with the complete Boolean algebra $Coll(\omega _1,\mathfrak{c})$. Note that there is a regular embedding $e:D\rightarrow D\times \mathrm{Fn}(\omega _1,\{0,1\},\omega _1)$ defined as $e(d)=(d,1)$ where $1$ is the biggest element in $\mathrm{Fn}(\omega _1,\{0,1\},\omega _1)$, i.e. the empty set. $e$ is extended onto $\bar{e}:\mathrm{RO}(P)\rightarrow Coll(\omega _1,\mathfrak{c})$ mapping $\mathrm{RO}(P)$ on a regular subalgebra of $Coll(\omega _1,\mathfrak{c})$.\\

Now we prove (2): We use the result of \cite{Ve} that every atomless game-closed complete Boolean algebra of density at most $\mathfrak{c}$  has a $\sigma$-closed dense subset (of size at most $\mathfrak{c}$ in fact). We note that the converse is trivial; see also \cite{Ve}. Since $\mathrm{Fn}(\omega _1,\{0,1\},\omega _1)$ contains a $\sigma$-closed dense subset it is game-closed. Now let $B$ be any complete atomless regular subalgebra, it is still game-closed and of density at most $\mathfrak{c}$. Thus it has a $\sigma$-closed dense subset which is atomless and of size at most $\mathfrak{c}$ thus it witnesses that $B$ has the BT-property.
\end{proof}

\section{Classical examples}
The Boolean algebra $\mathcal P(\omega)/fin$ is a prototype of an ordering with the BT-property. Recall the definitions of the cardinal invariants $\mathfrak{p},\mathfrak{t}$ (\cite{Bl}). It was proved recently by M. Malliaris and S. Shelah (\cite{MaSh}) that $\mathfrak{p}=\mathfrak{t}$. We shall discuss these cardinal invariants on other orderings too.

The second fundamental example is $(\mathrm{Dense}(\mathbb{Q}),\subseteq)$, where $\mathrm{Dense}(\mathbb{Q})$ is a set of all dense subsets in rationals. The situation here is similar with the previous example, it is not separative and the ordering $(\mathrm{Dense}(\mathbb{Q}),\subseteq)$ itself does not satisfy the BT-property. The separative modification is $(\mathrm{Dense}(\mathbb{Q}),\subseteq _\mathrm{nwd})$, where $A\subseteq _\mathrm{nwd} B$ if $A\setminus B$ is nowhere dense in $\mathbb{Q}$, has the BT-property. This ordering is studied in \cite{BHH}.

Let $\mathfrak{p}_\mathbb{Q},\mathfrak{t}_\mathbb{Q},\mathfrak{h}_\mathbb{Q}$ be the cardinal invariants of $(\mathrm{Dense}(\mathbb{Q}),\subseteq _\mathrm{nwd})$ defined in the same way as their counterparts in $([\omega]^{\omega},\subseteq ^*)$. It was proved in \cite{BHH} that $\mathfrak{p}_\mathbb{Q}=\mathfrak{p}$ and $\mathfrak{t}_\mathbb{Q}=\mathfrak{t}$ (thus $\mathfrak{p}_\mathbb{Q}=\mathfrak{t}_\mathbb{Q}=\mathfrak{t}$) whereas $\mathfrak{h}_\mathbb{Q}$ and $\mathfrak{h}$ are incomparable in ZFC, $\mathfrak{h}_\mathbb{Q}<\mathfrak{h}$ and $\mathfrak{h}_\mathbb{Q}>\mathfrak{h}$ are both consistent (see \cite{BHH} and \cite{Br}); and $\mathfrak{h}_\mathbb{Q}=\mathfrak{h}$ too, of course. 

For the third example, let $\mathbb{A} $  be the \emph{Cantor algebra}, i.e. the algebra of all clopen subset of $2^{\omega}$, and consider the countable product $\mathbb{A}^{\omega}$ modulo the ideal $\mathrm{Fin}\subseteq \mathbb{A}^{\omega}$, where $\mathrm{Fin}=\{f\in \mathbb{A}^{\omega}: |\{n:f(n)\neq 0\}|<\omega\}$. It satisfies the BT-property, moreover, $\mathbb{A}^{\omega}/\mathrm{Fin}$ is homogeneous.

$\mathfrak{t}(\mathbb{A}^{\omega}/\mathrm{Fin})=\mathfrak{t}$ and $\mathfrak{h}(\mathbb{A}^{\omega}/\mathrm{Fin})\leq \min\{\mathfrak{h},\mathrm{add}(\mathcal{M})\}$ (\cite{BH}) and it is consistent that $\mathfrak{h}(\mathbb{A}^{\omega}/\mathrm{Fin})<\mathfrak{h}$ (\cite{BH},\cite{Dow}).

\smallskip

For any Boolean algebra $B$ let us consider an infinite product $B^{\omega}$. Let $J$ be an ideal on $\omega$. By $\mathcal{I}_J\subseteq B^{\omega}$ we denote the ideal $\{f\in B^{\omega}: \{n\in \omega :f(n)\neq 0\}\in J\}$. The quotient algebra $B^{\omega}/\mathcal{I}_J$ consists of equivalence classes where $f,g\in B^{\omega}$ are equivalent if $\{n: f(n)\neq g(n)\}\in J$ ($f\bigtriangleup g\in \mathcal{I}_J$ equivalently). We state and prove a simple criterion for when such a product has the BT-property.
\begin{thm}
Let $B$ be a Boolean algebra and $J$ an ideal on $\omega$. Then the reduced product $B^{\omega}/\mathcal{I}_J$ has the BT-property if and only if $B$ contains a dense subset of size $\mathfrak{c}$ and (either $\mathcal{P}(\omega)/J$ is $\sigma$-closed or $J$ is a maximal ideal), and (either $\mathcal{P}(\omega)/J$ or $B$ is atomless).
\end{thm}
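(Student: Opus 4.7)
The BT-property requires a dense subset that is simultaneously of size $\leq \mathfrak{c}$, $\sigma$-closed, and atomless, so the plan is to match each of the three conditions in the theorem to one of these clauses and prove the equivalence piecewise.

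For \emph{necessity}, given any dense $E \subseteq B^{\omega}/\mathcal{I}_J$ of size $\leq \mathfrak{c}$, the set $\{f(n) : [f] \in E, n \in \omega, f(n) \neq 0\}$ is dense in $B$ of size $\leq \mathfrak{c}$: for any nonzero $b \in B$, some $[f] \in E$ lies below $[\bar b]$, and a witness $f(n) \neq 0$ satisfies $f(n) \leq b$. If $B$ has an atom $a$ and $\mathcal{P}(\omega)/J$ has an atom $[A]$, then the element $[\chi_a(A)]$ (with $\chi_a(A)(k) = a$ for $k \in A$ and $0$ otherwise) is directly checked to be an atom of $B^{\omega}/\mathcal{I}_J$, so the atomlessness clause forces at least one of $B$, $\mathcal{P}(\omega)/J$ to be atomless. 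For the $\sigma$-closedness clause, Theorem~\ref{BTtheorem} together with the external characterization of the BT-property gives that $\mathrm{RO}(B^{\omega}/\mathcal{I}_J)$ is a regular subalgebra of $Coll(\omega_1, \mathfrak c)$ and hence $\sigma$-closed; for a nonzero $b \in B$ the map $[A] \mapsto [\chi_b(A)]$ embeds $\mathcal{P}(\omega)/J$ into $B^{\omega}/\mathcal{I}_J$, so any strictly decreasing $[A_n]$ in $\mathcal{P}(\omega)/J$ with no nonzero lower bound yields a decreasing $[\chi_b(A_n)]$ in $B^{\omega}/\mathcal{I}_J$ whose meet in $\mathrm{RO}$ is nonzero, and any nonzero $[f] \in B^{\omega}/\mathcal{I}_J$ below that meet (which exists by density in the completion) would make $[\mathrm{supp}(f)]$ a nonzero lower bound for $[A_n]$ in $\mathcal{P}(\omega)/J$; this is a contradiction except when $\mathcal{P}(\omega)/J$ has no nontrivial decreasing sequences, i.e.\ when $J$ is maximal.

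For \emph{sufficiency}, fix a dense $D \subseteq B$ of size $\leq \mathfrak{c}$ closed under finite meets, and take $E = \{[f] : f \in (D \cup \{0\})^{\omega}\} \subseteq B^{\omega}/\mathcal{I}_J$. Size $\leq \mathfrak{c}$, density (by coordinatewise refinement on supports using density of $D$), and atomlessness (splitting either coordinatewise through $D$ when $B$ is atomless, or through the support when $\mathcal{P}(\omega)/J$ is atomless) are all routine. The core of the argument is $\sigma$-closedness of $E$. Given a decreasing sequence $[f_n]$ in $E$, I replace $f_n$ by the pointwise meet $f_0 \wedge \dots \wedge f_n$, which stays in $(D \cup \{0\})^{\omega}$ by closure of $D$ under finite meets, producing pointwise-decreasing representatives with pointwise-decreasing supports $A_n \notin J$. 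When $J$ is maximal, countable saturation of the ultrapower $B^{\omega}/\mathcal{U}$ provides a lower bound, which by density of $D$ can be replaced by one in $E$. When $\mathcal{P}(\omega)/J$ is $\sigma$-closed, I use $\sigma$-closedness to obtain a pseudo-lower-bound $A \notin J$ of $([A_n])$ and enumerate an infinite $A' \subseteq A$ as $\{a_0 < a_1 < \dots\}$, choosing $a_j \in A_j$ and, when possible, in $A_j \setminus A_{j+1}$. Setting $g(a_j) = f_j(a_j)$ and $g(k) = 0$ off $A'$, one verifies using pointwise monotonicity that $\{k : g(k) \not\leq f_n(k)\}$ is contained in the finite set $\{a_0, \dots, a_{n-1}\}$ and hence in $J$ (using $J \supseteq \mathrm{Fin}$); thus $[g] \leq [f_n]$ for all $n$, and $[g] \in E$ is nonzero on the positive set $A'$.

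The main obstacle is arranging the diagonal construction so that it handles uniformly both the strictly-decreasing-support regime and the eventually-constant-support regime. In the strict case the witnesses $a_j \in A_j \setminus A_{j+1}$ ensure directly that $a_j$ leaves $A_n$ for $n > j$, whereas in the stabilized regime one must instead rely on the values $f_n(a_j)$ being decreasing in $n$ at a fixed coordinate; the single diagonal formula $g(a_j) = f_j(a_j)$ succeeds in both cases, but justifying this requires bookkeeping how the chosen $A'$ witnesses membership in each $A_n$. The technical heart of the proof is therefore this bookkeeping for the pseudo-lower-bound $A'$, which exploits $\sigma$-closedness of $\mathcal{P}(\omega)/J$ crucially while demanding no $\sigma$-closedness of $B$ itself.
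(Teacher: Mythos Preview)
Your overall plan and your handling of the size and atomlessness clauses match the paper's. The real difference, and the real problems, are in how $\sigma$-closedness is treated.

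For \emph{sufficiency} the paper does not build a diagonal lower bound by hand. Instead it uses the dense embedding $\Phi:[f]\mapsto(\{i:f(i)\neq 0\},\dot{[f]})$ to identify $B^\omega/\mathcal{I}_J$ with the two-step iteration $\mathcal{P}(\omega)/J \star B^\omega/\dot{\mathcal{U}}$, shows that any ultrapower $B^\omega/\mathcal{U}$ is $\sigma$-closed (choose representatives whose supports shrink to $\emptyset$ and diagonalize), and concludes since an iteration of two $\sigma$-closed forcings is $\sigma$-closed. Your direct construction is a reasonable alternative, but as written it has a genuine gap: nothing guarantees that the diagonal set $A'=\{a_j:j\in\omega\}$ is $J$-positive. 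For instance, with $J$ the density-zero ideal and $A_n=A=\omega$ for every $n$, your recipe permits $a_j=2^j$, which gives $A'\in J$ and hence $[g]=0$. A correct direct argument must use essentially all of the pseudo-lower-bound $A$, not a sparse subset of it---e.g.\ set $g(k)=f_{m(k)}(k)$ for $k\in A$ with $m(k)=\max\{n:k\in A_n\}$ when this maximum is finite, and run your diagonal separately on an enumeration of $A\cap\bigcap_n A_n$; then $\mathrm{supp}(g)=A\notin J$. The paper's iteration argument sidesteps this bookkeeping entirely.

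For \emph{necessity} your inference ``$\mathrm{RO}(B^\omega/\mathcal{I}_J)$ is a regular subalgebra of $Coll(\omega_1,\mathfrak c)$ and hence $\sigma$-closed'' is incorrect: no atomless complete Boolean algebra is $\sigma$-closed, since repeatedly splitting $1$ yields a countably infinite partition and thus a decreasing chain with meet $0$. The paper argues more directly that if the transferred chain $[\chi_{X_n}]$ had any lower bound $[f]$ in $B^\omega/\mathcal{I}_J$ then $[\mathrm{supp}(f)]$ would bound $([X_n])$ in $\mathcal{P}(\omega)/J$; it never invokes $\sigma$-closedness of the completion.
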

\begin{proof}
Since $B^{\omega}/\mathcal{I}_J$ contains a dense subset of size $\mathfrak{c}$ if and only if $B$ contains a dense subset of size less or equal to $\mathfrak{c}$ the requirement on the cardinality is satisfied.

Suppose that $\mathcal{P}(\omega)/J$ is not $\sigma$-closed. Let $(X_n)_{n\in \omega}$ be a descending chain of infinite subsets  of $\omega$ such that the chain $([X_n])_{n\in \omega}$ does not have a lower bound in $\mathcal{P}(\omega)/J$, where $[X_n]$ is the equivalence class containing $X_n$. We define a descending chain $([f_n])_{n\in \omega}\subseteq B^{\omega}/\mathcal{I}_J$ as follows: $f_n(i)=1$ if $i\in X_n$ and $f_n(i)=0$ otherwise (it is the image of the chain $([X_n])_{n\in \omega}$ via the regular embedding of $\mathcal{P}(\omega)/J$ into $B^{\omega}/\mathcal{I}_J$). Suppose that it has a lower bound $[f]$. Then the support of $f$, i.e. the set $\{i: f(i)\neq 0\}$, would determine a lower bound for $([X_n])_{n\in \omega}$.

Next we use the fact mentioned in \cite{BH} that $B^{\omega}/\mathcal{I}_J$ can be written as an iteration $\mathcal{P}(\omega)/J\star B^{\omega}/\dot{\mathcal{U}}$, where $\dot{\mathcal{U}}$ is a name for an ultrafilter added by $\mathcal{P}(\omega)/J$. For $[f]\in B^{\omega}/\mathcal{I}_J$ we define $\Phi([f])=(\{i:f(i)\neq 0\},\dot{[f]})$, where $\dot{[f]}$ is a name for an equivalence class containing $f$ in $B^{\omega}/\dot{\mathcal{U}}$. $\Phi$ is easily verified to be a dense embedding which proves the fact.

Now observe that an ultrapower of any Boolean algebra is $\sigma$-closed. For a countable descending chain we can choose representatives of equivalence classes $(f_n)_{n\in \omega}$ so that support $f_0=\omega$, support $f_1\supseteq$ support $f_2\supseteq$ support $f_3\supseteq \ldots$ and $\bigcap _{n\in \omega}$ support $f_n=\emptyset$ since the ultrafilter is non-principal. Then we set $f(i)=f_n(i)$ if $n$ is the smallest number such that $i\in$ support $f_n\setminus$ support $f_{n+1}$. $f$ clearly determines the lower bound for the chain. Hence, we conclude that $B^{\omega}/\mathcal{I}_J$ is $\sigma$-closed since an iteration of two $\sigma$-closed forcings is.

To check atomlessness, if $\mathcal{P}(\omega)/J$ is atomless then for any $f\in B^{\omega}$, where the support of $f$ is not in $J$, we can always split the support of $f$ into two disjoint infinite sets both outside of $J$, restrict $f$ to these sets and make two disjoint elements of $B^{\omega}/\mathcal{I}_J$ below $[f]$. If $B$ is atomless then we can always find two disjoint successors coordinatewise. Finally, suppose that $\mathcal{P}(\omega)/J$ has an atom $[A]$ and $B$ has an atom $b$. Then $f\in B^\omega$ defined so that $f(n)=b$ for $n\in A$ and $f(n)=0$ for $n\notin A$ determines an atom in $B^{\omega}/\mathcal{I}_J$.
\end{proof}

\section{On classes of ideals ordered by reverse inclusion}
We shall deal with orderings that consist of ideals on $\omega$ of some type ordered by reverse inclusion. We assume that all such ideals extend the ideal of finite subsets of $\omega$. Since every ideal on $\omega$ can be considered as a subset of the Cantor space we can speak about the topological, resp. measure-theoretical characterizations of such ideals.
\subsection{Non-tall ideals}
An ideal $\mathcal{I}$ on $\omega$ is \emph{tall} if for every $X\in [\omega]^{\omega}$ there is infinite $Y\subseteq X$ that belongs to $\mathcal{I}$. Consider the set $\mathfrak{T}$ of all non-tall ideals on $\omega$ ordered by reverse inclusion.

First of all, this ordering is not separative. However, for every $A\in [\omega]^{\omega}$ consider the ideal $I_A$ of all subsets of $\omega$ that have a finite intersection with $A$. $I_A$ is a non-tall ideal and $B\subseteq ^* A$ implies $I_B\supseteq I_A$. Moreover, for every non-tall ideal $\mathcal{I}$ and some infinite set $A$ almost disjoint with every element of $\mathcal{I}$, $I_A\supseteq \mathcal{I}$. Thus we see that $([\omega]^{\omega},\subseteq ^*)$ is isomorphic with a dense subset of $(\mathfrak{T},\supseteq )$ and of its seprative modification showing that the separative modification of $(\mathfrak{T},\supseteq )$ has the BT-property, however it is forcing equivalent to $([\omega]^{\omega},\subseteq ^*)$.
\subsection{$F_\sigma$ ideals}
Consider the ordering of all $F_\sigma$ ideals on $\omega$ denoted as $\mathfrak{F}$ ordered by reverse inclusion. The study of this ordering was initiated by C. Laflamme in \cite{LF} and also studied in \cite{JuKr}.

It is immediate from the definition that $\mathfrak{F}$ is $\sigma$-closed. To show that it is atomless, consider any $F_\sigma$ ideal $I$. Since it is not maximal there is a subset $A\subseteq \omega$ such that neither $A$ nor $\omega\setminus A$ belong to $I$. We extend $I$ by adding $A$ to obtain an ideal $I_A$; similarly, we obtain an ideal $I_{\omega\setminus A}$. They are disjoint, we must prove they are $F_\sigma$. We do it for $I_A$. Write $I$ as $\bigcup_n F_n$ where each $F_n$ is closed, thus compact. The mapping $\pi$ that sends $X$ to $X\cup A$ is continuous, thus $\pi[I]=\bigcup_n \pi[F_n]$ is still $F_\sigma$ and the downward closure of $\pi[I]$ is still $F_\sigma$ and equal to $I_A$. Since there are only $\mathfrak{c}$-many $F_\sigma$ ideals we just proved that $\mathfrak{F}$ has the BT-property. However, we do not know what the height of this ordering is.
\subsection{Summable ideals}
Consider the ordering $(c_0^+\setminus \ell^1,\leq ^*)$ where $c_0^+$ is the set of all sequences of positive reals that tend to zero and $\ell^1$ the set of all sequences of reals whose sum converges. The order relation $\leq ^*$ is almost domination, i.e. $\bar{f}\leq ^*\bar{g}$ if $\{n: g_n>f_n\}$ is finite. The investigation of this ordering was initiated by P. Vojt\' a\v s in \cite{Vo}. $(c_0^+\setminus \ell^1,\leq ^*)$ is not separative but we will show that the separative quotient is isomorphic to the set $\mathcal{I}_\Sigma$ of all summable ideals ordered by inverse inclusion.

An ideal $\mathcal{I}$ is summable if there exists $\bar{f}\in c_0^+\setminus \ell^1$ such that $\mathcal{I}=\{A:\sum_{n\in A} f(n)<\infty\}$. Note that any summable ideal is an $F_\sigma$ $P$-ideal, thus $\mathcal{I}_\Sigma$ is a subordering of $\mathfrak{F}$.

We check that $\mathcal{I}_\Sigma$ has the BT-property. Let us verify atomlessness. Let $I$ be a summable ideal determined by a sequence $(a_n)_{n=0}^{\infty}$, and let $A\in I$. Then $\sum _{i\in \omega \setminus A} a_i$ diverges; we divide $\omega \setminus A$ into two infinite subsets $B_1$ and $B_2$ such that the appropriate sums both diverge. We make new sequences $(b_n)_{n=0}^{\infty}$ and $(c_n)_{n=0}^{\infty}$ so that $b_i=a_i$ for $i\in A\cup B_1$ and $b_i=z_i$ for $i\in B_2$, where $(z_n)_{n=0}^{\infty}$ is an arbitrary converging sequence of positive reals. $(c_n)_{n=0}^{\infty}$ is defined similarly, just $B_1$ and $B_2$ change their roles. Both $(b_n)_{n=0}^{\infty}$ and $(c_n)_{n=0}^{\infty}$ diverge. We denote the appropriate summable ideals $I_b$ and $I_c$. It is clear that $I_b ,I_c\supseteq I$ and that they are disjoint.

Let $(I_j)_{j\in \omega}$ be an increasing (in inclusion) sequence of summable ideals. Let $(a^j_n)_{n=0}^{\infty}$ be the sequence of positive reals that determines the ideal $I_j$. We may assume that $(a^0_n)_{n=0}^{\infty}\geq (a^1_n)_{n=0}^{\infty}\geq \ldots$. Let $n_0$ be such that $\sum _{j\leq n_0} a_j^0>1$. We set $a_n=a_n^0$ for $n\leq n_0$. Then we find a $n_1>n_0$ such that $\sum _{j=n_0+1}^{n_1} a_j^1>1$ and set $a_n=a_n^1$ for $n_0<n\leq n_1$. And so on to obtain the whole sequence $(a_n)_{n\in \omega}$ so that $(a_n)_{n\in \omega}\leq ^* (a_n^j)_{n\in \omega}$ for all $j\in \omega$.

To verify separativness, consider ideals $I_a$ and $I_b$ corresponding to sequences $(a_n)_{n=0}^{\infty}$ and $(b_n)_{n=0}^{\infty}$, such that $I_a\nsupseteq I_b$, i.e. there is a set $B\in I_b$ which does not belong to $I_a$. That means $\sum _{k\in B} b_k <\infty$ but $\sum _{k\in B} a_k =\infty$. If $\omega \setminus B$ belongs to $I_a$ then $I_a$ and $I_b$ are already disjoint, if this is not that case then we make a new sequence $(c_n)_{n=0}^{\infty}$ such that $c_n=a_n$ for $n\in B$ and  $\sum _{k\in \omega \setminus B} c_k <\infty$. The corresponding ideal $I_c$ is below $I_a$ and disjoint with $I_b$.

It is easy to check that if $(a_n)_{n=0}^{\infty}\approx _{\mathrm{sep}}(b_n)_{n=0}^{\infty}$, i.e. $\forall (c_n)_{n=0}^{\infty}\in (c_0^+\setminus \ell^1,\leq ^*) ((c_n)_{n=0}^{\infty}\perp (a_n)_{n=0}^{\infty} \Leftrightarrow (c_n)_{n=0}^{\infty}\perp (b_n)_{n=0}^{\infty})$, then $(a_n)_{n=0}^{\infty}$ and $(b_n)_{n=0}^{\infty}$ determine the same summable ideal and the mapping $\Phi :(c_0^+\setminus \ell^1,\leq ^*) \rightarrow (\mathcal{I}_\Sigma,\supseteq )$, defined as $\Phi ((c_n)_{n=0}^{\infty})=\{A\subseteq \omega :\sum _{n\in A} c_n<\infty\}$, is an onto homomorphism of orderings preserving the disjointness relation. And the preimage of each summable ideal is precisely an equivalence class of sequences in $\approx _\mathrm{sep}$.
\begin{prop}
$\mathfrak{t}((c_0^+\setminus \ell^1,\leq ^*))=\mathfrak{t}$.
\end{prop}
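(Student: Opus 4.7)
My plan is to prove the two inequalities separately. For $\mathfrak{t}((c_0^+\setminus\ell^1,\leq^*))\leq\mathfrak{t}$, I will use a ``harmonic embedding'' of a tower in $([\omega]^\omega,\subseteq^*)$. Fix a tower $(X_\alpha)_{\alpha<\mathfrak{t}}$ in $([\omega]^\omega,\subseteq^*)$ arranged so that each $X_\alpha$ has divergent harmonic sum; such a tower exists by transfinite recursion, since at successor stages one can split greedily into two harmonic-divergent halves, and at limit stages of cofinality less than $\mathfrak{t}$ one can build a harmonic-divergent pseudo-intersection by attaching finite chunks with $1/n$-weight at least $1$ below each $X_\beta$ in turn. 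Define
\[
a^\alpha_n:=\begin{cases}1/(n+1) & \text{if } n\in X_\alpha,\\ 2^{-n-1} & \text{if } n\notin X_\alpha.\end{cases}
\]
Then $a^\alpha\in c_0^+\setminus\ell^1$ (the sum along $X_\alpha$ is harmonic, hence divergent), and a case analysis shows $a^\beta\leq^*a^\alpha$ for $\alpha<\beta$, using $X_\beta\subseteq^*X_\alpha$. A putative lower bound $a\in c_0^+\setminus\ell^1$ would yield an infinite pseudo-intersection $X:=\{n:a_n>2^{-n-1}\}\subseteq^*X_\alpha$ (infinite because $\sum_{n\notin X}a_n\leq\sum 2^{-n-1}<\infty$ forces $\sum_{n\in X}a_n=\infty$), contradicting tower-ness.

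For $\mathfrak{t}((c_0^+\setminus\ell^1,\leq^*))\geq\mathfrak{t}$, given $\kappa<\mathfrak{t}$ and a $\leq^*$-decreasing $(a^\alpha)_{\alpha<\kappa}$, I will construct a lower bound. Setting $Y_\alpha:=\{n:a^\alpha_n>2^{-n-1}\}$, each $Y_\alpha$ is infinite by the mass-balance argument above, and $(Y_\alpha)$ is $\subseteq^*$-decreasing, so $\kappa<\mathfrak{t}$ furnishes a pseudo-intersection $Y$. The lower bound $a$ is then built by adapting the paper's $\sigma$-closedness argument transfinitely: for each $\alpha$ set $f^\alpha(k):=\min\{n:\sum_{i\leq n}a^\alpha_i\geq k\}$; these form a $\leq^*$-increasing family in $\omega^\omega$, so since $\mathfrak{t}\leq\mathfrak{b}$ there is a common $\leq^*$-dominating $g$. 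One then defines $a$ block-by-block on the $g$-partition, choosing values on each block small enough to lie below each $a^\alpha$ on that block but with block-sum at least $1$ so that $\sum a=\infty$.

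The main obstacle will be handling the uncountable-cofinality case of the $\geq$ direction: if $\mathrm{cf}(\kappa)>\omega$, no countable cofinal sequence in $\kappa$ is available, and the block-by-block construction cannot directly mimic the paper's countable argument. One navigates this by exploiting the pseudo-intersection $Y$---on which each $a^\alpha_n$ exceeds $2^{-n-1}$, giving a uniform lower bound on the tails---together with the common dominant $g$ and the $\mathfrak{t}$-distributivity of $\mathcal{P}(\omega)/\mathrm{fin}$, to coordinate the block structures across all $\alpha<\kappa$ simultaneously. A secondary issue in the $\leq$ direction is building a harmonic-divergent pseudo-intersection at uncountable limit stages of the tower construction, which is resolved by a similar $\mathfrak{t}$-distributivity argument.
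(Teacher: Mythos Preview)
Your proposal has genuine gaps in both directions.

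For $\mathfrak{t}((c_0^+\setminus\ell^1,\leq^*))\leq\mathfrak{t}$: building a $\subseteq^*$-decreasing chain by transfinite recursion for $\mathfrak{t}$ steps does not by itself produce a chain with \emph{no} pseudo-intersection; that is an extra property you must arrange, typically by interleaving with a fixed tower $(Y_\alpha)_{\alpha<\mathfrak{t}}$ known to be unbounded. But then $X_\alpha\subseteq^*Y_\alpha$, and if some $Y_\alpha$ already has convergent harmonic sum (e.g.\ $Y_\alpha\subseteq\{2^n:n\in\omega\}$) no subset of it is harmonic-divergent, so your recursion breaks. This is repairable (blow up each $n$ to an interval $I_n$ with $\sum_{k\in I_n}1/(k+1)\geq 1$ and replace $Y$ by $\bigcup_{n\in Y}I_n$), but your sketch does not do it, and the hand-wave about uncountable limit stages does not address it. The paper sidesteps the issue entirely: it first proves $\mathfrak{t}((c_0^+\setminus\ell^1,\leq^*))\leq\mathfrak{b}$ directly, and then, assuming for contradiction that $\mathfrak{t}<\mathfrak{t}((c_0^+\setminus\ell^1,\leq^*))\leq\mathfrak{b}$, uses a dominating $g$ to manufacture divergence from an \emph{arbitrary} tower, with weights $1/k$ on the $k$-th $g$-block rather than $1/(n+1)$.

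For $\mathfrak{t}((c_0^+\setminus\ell^1,\leq^*))\geq\mathfrak{t}$: this is where the essential idea is missing. On a fixed $g$-block the values $a^\alpha_n$ over the (possibly uncountably many) $\alpha<\kappa$ can have pointwise infimum $0$, so there is no single choice on that block lying below every $a^\alpha$ with block-sum $\geq 1$; the dominating $g$ only guarantees that \emph{each} $a^\alpha$ individually has large block-sum. Your pseudo-intersection $Y$ gives merely the uniform lower bound $2^{-n-1}$, which is summable and therefore cannot produce divergence. The paper's key move, absent from your outline, is to \emph{discretize}: on each block $I_n$ one restricts to a finite set of rational values, so the set $\mathcal{F}_n$ of admissible profiles on $I_n$ with sum $>1/2$ is finite. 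Then for each $\alpha$ the set $X_\alpha$ of profiles in the countable set $\mathfrak{F}=\bigcup_n\mathcal{F}_n$ that lie pointwise below $a^\alpha$ is infinite, the family $(X_\alpha)_{\alpha<\kappa}$ is $\subseteq^*$-decreasing, and now $\kappa<\mathfrak{t}$ applied to $[\mathfrak{F}]^\omega$ yields a pseudo-intersection from which the lower bound is read off. Invoking ``$\mathfrak{t}$-distributivity of $\mathcal{P}(\omega)/\mathrm{fin}$'' is not a substitute for this step unless you specify the countable set on which $\mathfrak{t}$ is being applied---and identifying that set is precisely the content of the argument.
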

\begin{proof}
To simplify the notation we will write $\bar{a}$ instead of $(a_n)_{n=0}^\infty$. Let $(\bar{a}_{\alpha})_{\alpha <\kappa}$ be a descending chain of sequences from  $(c_0^+\setminus \ell^1,\leq ^*)$ of length $\kappa <\mathfrak{t}$. We use the methods from \cite{BSc} to show it has a lower bound.

For each $\alpha <\kappa$ let $h_{\alpha}:\omega \rightarrow \omega$ be a function such that $\forall n\in \omega (\frac{1}{h_{\alpha}(n)}\leq \bar{a}_{\alpha ,n})$. Since $\kappa <\mathfrak{t}\leq\mathfrak{b}$, there is a function $h\in \omega ^{\omega}$ that almost dominates all $h_{\alpha}$'s, i.e. $h\geq ^*h_{\alpha}$ for all $\alpha <\kappa$.

Similarly, for each $\alpha <\kappa$ let $f_{\alpha}:\omega \rightarrow \omega$ be a function such that $\forall n\in \omega (\sum _{f_{\alpha}(n)\leq i<f_{\alpha}(n+1)} \bar{a}_\alpha(i)>1)$. Since $\kappa <\mathfrak{t}\leq\mathfrak{b}$, there is a function $f\in \omega ^{\omega}$ that almost dominates all $f_{\alpha}$'s, i.e. $f\geq ^*f_{\alpha}$ for all $\alpha <\kappa$. Define $g\in \omega ^{\omega}$ recursively so that $g(0)=f(0)$ and $g(n+1)=f(g(n)+1)$. Note that for every $\alpha <\kappa$ and all but finitely many $n$'s $\sum _{g(n)\leq i<g(n+1)} \bar{a}_\alpha(i)>1$ since $g(n)<f_{\alpha}(g(n))<f_{\alpha}(g(n)+1)\leq g(n+1)$. We denote $I_n$ the interval $[g(n),g(n+1))$.

For every $n$, we denote $\mathcal{F}_n$ the following set of functions $\{F: \mathrm{dom}(F)\subseteq I_n \wedge \mathrm{rng}(F)\subseteq \{\frac{1}{2|I_n|},\frac{2}{2|I_n|},\ldots ,1\}\wedge \sum _{i\in \mathrm{dom}(F)} F(i) >\frac{1}{2}\}$. Let $\mathfrak{F}=\bigcup _{n\in \omega} \mathcal{F}_n$. In the following we shall treat $\mathfrak{F}$ as $\omega$.

For every $\bar{a}_{\alpha}$, let $$X_{\alpha}=\{F: \exists n (F\in \mathcal{F}_n \wedge \forall i\in \mathrm{dom}(F) (F(i)\leq \bar{a}_\alpha(i))\}$$ An easy pigeon-hole type argument shows that it is infinite for every $\alpha <\kappa$. It is also clear that $X_{\beta}\setminus X_{\alpha}$ is finite for $\alpha <\beta$. Since $\kappa <\mathfrak{t}$, there is a lower bound $X\subseteq \mathfrak{F}$. By shrinking it if neccessary, we can assume that $|X\cap \mathcal{F}_n|\leq 1$ for every $n$. Finally, we define a sequence $\bar{a}$ as follows:

For every $m\in \omega$, if there exists $F\in X$ such that $m\in \mathrm{dom}(F)$ then we set $\bar{a}(m)=F(m)$. Otherwise, we set $\bar{a}(m)=\frac{1}{h(m)}$. It is now easy to check that $\bar{a}$ is the desired lower bound.

To prove the converse, let us at first prove that $\mathfrak{t}((c_0^+\setminus \ell^1,\leq ^*))\leq\mathfrak{b}$. Suppose the contrary. Let $(b_{\alpha})_{\alpha<\mathfrak{b}}$ be a system of almost increasing functions from $\omega ^{\omega}$ without an upper bound, $\pi :\mathbb{N}\times\mathbb{N}\rightarrow \mathbb{N}$ a bijection and $(l_n)_{n=0}^{\infty}$ a strictly decreasing sequence from $\ell ^1$ such that $l_n<\frac{1}{n}$ for every $n$. We define a descending chain of sequences from  $(c_0^+\setminus \ell^1,\leq ^*)$ $(\bar{a}_{\alpha})_{\alpha <\mathfrak{b}}$ as follows: $\bar{a}_\alpha(\pi (1,k))=l_k$ for $k\leq b_\alpha(0)$, for $l> b_\alpha(0)$ we set $\bar{a}_\alpha(\pi (1,l))=\frac{1}{l}$; generally,  $\bar{a}_\alpha(\pi (n,k))=l_k$ for $k\leq b_\alpha(n-1)$, for $l> b_\alpha(n-1)$ we set $\bar{a}_\alpha(\pi (n,l))=\frac{1}{l}$.

Let $\bar{a}$ be a lower bound for this chain. Define a function $f$ by $f(n)=\min\{k: \bar{a}(\pi (n,k))>l_k\}$. It is easy to check that $f$ almost dominates $(b_{\alpha})_{\alpha<\mathfrak{b}}$, a contradiction.

Now assume that $\mathfrak{t}<\mathfrak{t}((c_0^+\setminus \ell^1,\leq ^*))$. Let $(X_{\alpha})_{\alpha <\mathfrak{t}}\subseteq [\omega]^{\omega}$ be a descending chain without a lower bound. We define $f_{\alpha}\in \omega ^{\omega}$ for every $\alpha <\mathfrak{t}$ so that $f_{\alpha}(n)=k$ such that $|X_{\alpha}\cap [f_{\alpha}(n-1),f_{\alpha}(n))|\geq n+1$. Since $\mathfrak{t}<\mathfrak{t}((c_0^+\setminus \ell^1,\leq ^*))\leq \mathfrak{b}$,  we can again find $g\in \omega ^{\omega}$ such that for every $\alpha <\mathfrak{t}$ and for almost all $n$'s $|X_{\alpha}\cap [g(n-1),g(n))|\geq n+1$.

Define a chain $(\bar{a}_{\alpha})_{\alpha <\mathfrak{t}}$ of sequences as follows: $\bar{a}_{\alpha ,n}=\frac{1}{k}$ if $n\in X_{\alpha}\cap [g(k-1),g(k))$; if no such $k$ exists then let $\bar{a}_{\alpha ,n}=l_n$.

Finally, let $\bar{a}$ be a lower bound for this descending chain and define a lower bound $X=\{n: \bar{a}_n>l_n\}$ for the chain $(X_{\alpha})_{\alpha <\mathfrak{t}}$.
\end{proof}

\subsection{Meager and null ideals}
Next we consider the class of all meager ideals $\mathfrak{M}$ and the set of all ideals $\mathfrak{N}$ of measure zero; i.e. those ideals that are meager sets and null sets respectively in the Cantor space topology. Simultaneously, we study the set of all hereditary meager $\mathfrak{M_H}$ and hereditary null $\mathfrak{N_H}$ ideals, where an ideal $I$ is hereditary meager (null) if for every $X\in I^+$ the restriction $I\upharpoonright X=\{A\in I: A\subseteq X\}$ is meager (null) in the Cantor space $2^X$.

It is obvious they are both $\sigma$-closed. We show they are atomless, what the separative quotient for meager ideals is and that there is no dense subset in both of these orderings that has cardinality $\mathfrak{c}$. In fact, there are $2^{\mathfrak{c}}$ mutually disjoint elements in both orderings. Let us note that by $\approx_\mathrm{sep}$ we denote the ``separative equivalence" (in the ordering of meager ideals); i.e. $I\approx_\mathrm{sep} J$ if and only if for any meager ideal $K$, $K$ is disjoint with $I$ iff $K$ is disjoint with $J$.

We will use the following characterizations of meager and null ideals. 
\begin{prop}[Talagrand; see for example Theorem 4.1.2 \cite{BJ}]\label{Tal_meager}
An ideal $I$ on $\omega$ is meager if and only if there is a partition $(P_i)_{i\in \omega}$ of $\omega$ into finite sets such that $\bigcup _{i\in A} P_i\in I$ iff $A$ is finite.
\end{prop}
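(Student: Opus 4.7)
I would prove the two directions of the equivalence separately.

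For the \emph{easier} direction, suppose such a partition $(P_i)_{i \in \omega}$ exists; the plan is to exhibit a comeager set in $\cantor$ disjoint from $I$. I would take
\[
G = \bigcap_{N \in \omega}\bigcup_{i \geq N} \{X \in \cantor : P_i \subseteq X\}.
\]
Each inner union is open (being a union of clopen sets) and is dense in $\cantor$, since any finite condition $u \in \arbbin$ can be extended to contain $P_i$ for any $i$ whose part $P_i$ lies beyond the support of $u$. Hence $G$ is comeager. If some $X \in G$ belonged to $I$, then choosing an infinite $A$ with $P_i \subseteq X$ for $i \in A$ would give $\bigcup_{i \in A} P_i \subseteq X$, so $\bigcup_{i \in A} P_i \in I$ by heredity of $I$, contradicting the hypothesis. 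Thus $I \subseteq \cantor \setminus G$, which is meager.

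For the \emph{harder} direction, suppose $I$ is meager. The plan is to write $I \subseteq \bigcup_n F_n$ with each $F_n$ closed and nowhere dense in $\cantor$, and to arrange that the sequence $(F_n)$ is increasing and that each $F_n$ is \emph{hereditary} (downward closed under inclusion). Then I would build the partition recursively. Given $m_0 < \cdots < m_n$, I would seek $m_{n+1} > m_n$ such that no $X \in F_n$ contains $[m_n, m_{n+1})$ as a subset. The existence of $m_{n+1}$ should follow from a compactness argument: if it failed, then for every $m > m_n$ there would exist $X_m \in F_n$ with $[m_n, m) \subseteq X_m$, and by compactness of $\cantor$ together with closedness of $F_n$ one could extract a cluster point $X \in F_n$ containing all of $[m_n, \infty)$, hence cofinite; by hereditarity, $F_n$ would then contain the nonempty clopen set of all subsets of $X$, contradicting nowhere density. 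Setting $P_n = [m_n, m_{n+1})$ would then produce the required partition: any $X \in I$ lies in some $F_{n_0}$, hence in every $F_n$ with $n \geq n_0$, so $P_n \not\subseteq X$ for $n \geq n_0$; for any infinite $A$, picking $n \in A$ with $n \geq n_0$ shows $P_n \not\subseteq X$, whence $\bigcup_{i \in A} P_i \not\subseteq X$ and so $\bigcup_{i \in A} P_i \notin I$.

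The step I expect to be the main obstacle is the reduction to \emph{hereditary} closed nowhere dense sets $F_n$. The naive move is to replace each $F_n$ by the downward closure of $F_n \cap I$, which automatically stays inside the downward closed set $I$; one must then verify that this operation preserves both closedness (via the fact that the downward closure of a closed subset of $\cantor$ is the projection of a closed subset of $\cantor \times \cantor$) and, more delicately, nowhere density. Once this reduction is justified, the recursive construction above runs without further difficulty.
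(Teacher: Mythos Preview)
The paper does not supply its own proof of this proposition; it is quoted as a known result of Talagrand with a reference to Bartoszy\'nski--Judah. So there is nothing in the paper to compare against, and I will simply assess your argument.

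Your easy direction is correct and cleanly written.

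For the hard direction your overall plan is the standard one, and you correctly identify the hereditarity reduction as the crux. However, your proposed fix does not work as stated. You suggest replacing $F_n$ by the downward closure of $F_n\cap I$ and then invoke ``the downward closure of a \emph{closed} subset of $2^\omega$ is the projection of a closed subset of $2^\omega\times 2^\omega$''. The problem is that $F_n\cap I$ is typically \emph{not} closed, so this projection argument does not apply, and in fact the downward closure of $F_n\cap I$ need not be closed. Concretely, take $I=\mathrm{Fin}$ and $F_0=\{\{0,\dots,n\}:n\in\omega\}\cup\{\omega\}$, which is closed and (being countable) nowhere dense; then $F_0\cap I=\{\{0,\dots,n\}:n\in\omega\}$ and its downward closure is exactly $\mathrm{Fin}$, which is dense in $2^\omega$ and not closed. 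Taking closures afterward does not rescue the situation either: in this example $\overline{F_0\cap I}=F_0$ contains $\omega$, so its downward closure is all of $2^\omega$.

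A correct route avoids hereditarity altogether. First prove the general fact (valid for any meager set $M$, no ideal structure needed): there exist $k_0<k_1<\cdots$ and $s_n\in 2^{[k_n,k_{n+1})}$ such that every $X\in M$ satisfies $X\upharpoonright[k_n,k_{n+1})\neq s_n$ for all but finitely many $n$. This is obtained by a short Baire/projection argument: if $F_n$ is closed nowhere dense then for each $r\in 2^{k_n}$ the fiber $\{y:r^\frown y\in F_n\}$ is closed, and since finitely many closed sets cannot cover $2^{[k_n,\infty)}$ unless one has interior, some $s_n$ on a finite block avoids $F_n$ entirely. Second, use the ideal property: for $X\in I$ also $X\cap\bigcup_n s_n^{-1}(1)\in I$, and applying the conclusion to this subset shows $s_n^{-1}(1)\not\subseteq X$ for all but finitely many $n$; hence the interval partition $P_n=[k_n,k_{n+1})$ has the required property.
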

\begin{prop}[Bartoszy\' nski; Theorem 4.1.3 \cite{BJ}]\label{Bart_null}
An ideal $I$ on $\omega$ is null if and only if there exists an infinite system $\{\mathcal{A}_n:n\in \omega\}$ such that
\begin{enumerate}
\item each $\mathcal{A}_n$ is a finite set consisting of finite subsets of $\omega$
\item $\forall m\neq n\in \omega (\bigcup \mathcal{A}_n\cap \bigcup \mathcal{A}_m=\emptyset)$
\item $\sum _{n\in \omega} \mu \{X\subseteq \omega: \exists a\in \mathcal{A}_n (a\subseteq X)\}<\infty$
\item for every $X\in I$ $\exists^\infty n \exists a\in \mathcal{A}_n (X\cap a=\emptyset)$

\end{enumerate}
where $\mu$ is the Lebesgue measure on the Cantor space.
\end{prop}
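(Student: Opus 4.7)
I would prove the two directions separately. For the easy direction $(\Leftarrow)$, suppose $\{\mathcal{A}_n\}_{n\in\omega}$ satisfies (1)--(4), and set
\[
V_n \;=\; \bigcup_{a\in\mathcal{A}_n}\{X\subseteq\omega : X\cap a = \emptyset\},
\]
a finite union of basic clopen sets. The complement map $c:X\mapsto\omega\setminus X$ is a measure-preserving homeomorphism of $2^\omega$ sending $V_n$ bijectively onto $U_n=\{Y : \exists a\in\mathcal{A}_n\,(a\subseteq Y)\}$, so $\mu(V_n)=\mu(U_n)$. By~(3), $\sum_n \mu(V_n)<\infty$, and the Borel--Cantelli lemma gives $\mu(\limsup_n V_n)=0$. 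Condition~(4) is exactly the statement $I\subseteq \limsup_n V_n$, so $I$ is null.

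For the nontrivial direction $(\Rightarrow)$, assume $I$ is null, hence so is $c(I)$. I would invoke the standard Borel--Cantelli characterization of null sets in $2^\omega$: there exist open sets $O_n$ with $\sum_n\mu(O_n)<\infty$ and $c(I)\subseteq\limsup_n O_n$. Each $O_n$ is a countable union of basic cylinders $[a]=\{Y:a\subseteq Y\}$ with $a\subseteq\omega$ finite; a routine truncation, absorbing the discarded pieces into an additional null set, lets me assume each $O_n$ is already a finite union $\bigcup_{a\in\mathcal{B}_n}[a]$. Pulling this back through $c$ yields $I\subseteq\limsup_n V_n$ with $V_n=\bigcup_{a\in\mathcal{B}_n}\{X:X\cap a=\emptyset\}$, which already provides candidates for (1), (3), and (4).

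The main obstacle, and the step requiring real care, is enforcing the disjointness condition~(2) on the supports $\bigcup\mathcal{B}_n$. My plan is to pass to a sparse subsequence $(n_j)_j$ and, at each stage, replace $\mathcal{B}_{n_j}$ by a family of finite sets living inside a prescribed window $[k_j,k_{j+1})\subseteq\omega$, where $0=k_0<k_1<\cdots$ is chosen inductively large enough. Because $I$ extends $\mathrm{Fin}$ and is closed under finite modifications, discarding a bounded initial segment of coordinates from each $a$ (or padding with fresh coordinates in the window to keep measures under control) does not destroy the $\limsup$ inclusion. The bookkeeping to preserve simultaneously the measure sum and the covering property while achieving disjoint supports is the technical heart of Bartoszy\'nski's argument; once it succeeds, setting $\mathcal{A}_j$ to be the shifted family yields all of (1)--(4).
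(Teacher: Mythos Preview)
The paper does not prove this proposition at all: it is quoted verbatim as Theorem~4.1.3 of \cite{BJ} and used as a black box in the subsequent arguments about null ideals. So there is no ``paper's own proof'' to compare your proposal against.

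That said, a brief comment on your sketch. The $(\Leftarrow)$ direction is clean and correct. In the $(\Rightarrow)$ direction your overall plan---Borel--Cantelli cover, truncate to finite families, then disjointify supports---is the right shape, but one step is stated inaccurately: it is \emph{not} true that an arbitrary open set in $2^\omega$ is a countable union of cylinders of the one-sided form $[a]=\{Y:a\subseteq Y\}$ (e.g.\ $\{Y:0\notin Y\}$ is clopen but contains no such $[a]$). In Bartoszy\'nski's argument one works instead with genuine basic clopen cylinders $[s]$, $s\in 2^{<\omega}$, and only at the end converts the data into families $\mathcal{A}_n$ of finite sets, using that $I$ is downward closed and contains $\mathrm{Fin}$; alternatively one first replaces the $O_n$ by slightly larger open sets that \emph{are} upward closed. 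Your final paragraph correctly identifies the disjointification of supports as the technical core, and correctly defers the bookkeeping to the reference; just be aware that the passage from ``open cover'' to ``families of finite sets satisfying (1)--(4)'' needs a bit more care than the sentence about $[a]$'s suggests.
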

\begin{prop}
There is a mapping $\Phi :(\mathfrak{M}, \supseteq )\rightarrow (\mathfrak{M_H}, \supseteq )$ such that $\forall I\in \mathfrak{M} (\Phi (I)\supseteq I \wedge \Phi (I)\approx _{\mathrm{sep}} I)$.
\end{prop}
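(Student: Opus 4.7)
The plan is to build $\Phi(I)$ as a transfinite closure of $I$ under a natural enlargement operation justified by a Kuratowski--Ulam-type lemma. Since $\Phi(I)$ will extend $I$, one direction of the separative equivalence is automatic: for any $K \in \mathfrak{M}$ with $K \perp I$, the ideal $\langle K, \Phi(I)\rangle \supseteq \langle K, I\rangle$ already fails to be proper meager, so $K \perp \Phi(I)$. The real work will be to ensure that the enlargement does not destroy any compatibilities.

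The engine of the argument will be the following extension lemma. For any ideal $I$ on $\omega$ and any $X \subseteq \omega$, the homeomorphism $2^\omega \cong 2^X \times 2^{\omega\setminus X}$ identifies $I$ with the product $(I \upharpoonright X) \times (I \upharpoonright (\omega \setminus X))$, since $A \in I$ iff both $A \cap X \in I \upharpoonright X$ and $A \cap (\omega \setminus X) \in I \upharpoonright (\omega \setminus X)$. By Kuratowski--Ulam this product is meager iff one of the factors is meager. Consequently, when $I \in \mathfrak{M}$ and $I \upharpoonright X$ is non-meager in $2^X$, $I \upharpoonright (\omega \setminus X)$ must be meager in $2^{\omega \setminus X}$, and the ideal $\langle I, X \rangle = \{C : C \setminus X \in I\}$, which corresponds to $2^X \times (I \upharpoonright (\omega \setminus X))$ under the same factorization, is meager. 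The same dichotomy yields $\langle I, X \rangle \approx_{\mathrm{sep}} I$: if $K \in \mathfrak{M}$ is compatible with $I$, then $\langle I, K\rangle \in \mathfrak{M}$; its restriction to $X$ contains $I \upharpoonright X$ and is therefore non-meager, so $\langle I, K \rangle \upharpoonright (\omega \setminus X)$ is meager, hence $\langle I, K, X\rangle = 2^X \times (\langle I, K\rangle \upharpoonright (\omega \setminus X))$ is meager; and $\langle I, K, X\rangle$ stays proper because $\omega \setminus X \in \langle I, K\rangle$ would force $\langle I, K\rangle \supseteq \langle I, \omega \setminus X\rangle = (I \upharpoonright X) \times 2^{\omega \setminus X}$, which is non-meager.

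With this lemma in hand, $\Phi(I)$ will be built by transfinite recursion: set $I_0 = I$; at a successor $\alpha + 1$, if $I_\alpha$ is not hereditary meager, choose via a fixed well-ordering of $\mathcal{P}(\omega)$ a witness $X_\alpha \in I_\alpha^+$ with $I_\alpha \upharpoonright X_\alpha$ non-meager and set $I_{\alpha+1} = \langle I_\alpha, X_\alpha\rangle$; at limits take unions. The Extension Lemma keeps each $I_{\alpha+1}$ in $\mathfrak{M}$ with $I_{\alpha+1} \approx_{\mathrm{sep}} I$, and since each successor step strictly enlarges the ideal, cardinality forces the chain to stabilize at some $\gamma < (2^{\aleph_0})^+$, at which stage $\Phi(I) := I_\gamma$ belongs to $\mathfrak{M_H}$.

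The main obstacle will be controlling the construction at uncountable limit stages, where a chain of meager ideals can accumulate to a non-meager ideal and break the induction. I plan to sidestep this either by strengthening the Extension Lemma so that at each successor step every current witness is adjoined simultaneously (using that their complements all lie in the ``meager-restriction'' family $\mathcal{G} = \{Y : I \upharpoonright Y \text{ is meager in } 2^Y\}$, which should admit a uniform Talagrand partition controlling the collective extension), thereby forcing the recursion to terminate in countably many stages; or alternatively by writing $\Phi(I)$ directly in closed form in terms of a fixed Talagrand witness for $I$, bypassing the recursion entirely. Verifying that one of these refinements genuinely preserves both meagerness and separative equivalence globally is the central technical difficulty.
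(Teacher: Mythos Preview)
Your Extension Lemma is correct and contains the essential idea, but the proof as written is incomplete: you yourself identify the gap at uncountable limit stages of the recursion, and neither of your proposed fixes is actually carried out. A transfinite union of meager ideals can indeed fail to be meager, so the recursion as stated does not go through.

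The paper avoids this entirely by taking your ``option 2'' and writing $\Phi(I)$ in closed form:
\[
\Phi(I)=\tilde I=\{A\subseteq\omega: I\upharpoonright A\text{ is not meager in }2^A\}.
\]
Your Kuratowski--Ulam factorization $I\cong (I\upharpoonright A)\times(I\upharpoonright(\omega\setminus A))$ is precisely what shows this set is an ideal (downward closed and closed under finite unions) extending $I$. The point you are missing is that no iteration is needed: $\tilde I$ is already hereditary meager. The paper checks this directly via Talagrand's partition criterion. A partition $(P_n)$ witnessing that $I$ is meager still witnesses that $\tilde I$ is meager, and for $A\in\tilde I^+$ (so $I\upharpoonright A$ is meager) a Talagrand partition $(Q_n)$ of $A$ for $I\upharpoonright A$ also witnesses that $\tilde I\upharpoonright A$ is meager; otherwise some infinite union $B=\bigcup_{i\in C}Q_i$ would lie in $\tilde I$, but then the subpartition $(Q_i)_{i\in C}$ shows $I\upharpoonright B$ is meager, contradicting $B\in\tilde I$.

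For $\Phi(I)\approx_{\mathrm{sep}} I$ the paper's argument is even shorter than yours: the closed form is monotone in $I$, so if $I$ and $J$ are compatible then $\Phi(\langle I,J\rangle)$ is a meager ideal extending both $\Phi(I)$ and $J$. Your version of this step, arguing via $\langle I,K,X\rangle$, is also correct but only handles one $X$ at a time, which is why you were pushed toward the recursion in the first place.
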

Note that it follows that for any meager ideal $I$, $\Phi(I)$ is the least element in the equivalence class of $\approx_\mathrm{sep}$ containing $I$.
\begin{proof}
For a meager ideal $I$ consider the set $$\tilde{I}=\{A\subseteq \omega: I\upharpoonright A \text{ is not meager}\}$$ Let $(P_n)_{n\in \omega}$ be the partition of $\omega$ witnessing $I$ is meager (from \ref{Tal_meager}). $\tilde{I}$ is a hereditary meager ideal containing $I$. To see that it is meager check that $(P_n)_{n\in \omega}$ still works. Let $A\in \tilde{I}^+$ be arbitrary. Since $A\notin \tilde{I}$ we have $I\upharpoonright A$ is meager, so there is a partition $(Q_n)_{n\in \omega}$ of $A$ into finite sets such that $\bigcup _{i\in C} Q_i\in I$ iff $C$ is finite. If $\tilde{I}\upharpoonright A$ were not meager then there would be an infinite set $C\subseteq \omega$ such that $B=\bigcup _{i\in C} Q_i \in \tilde{I}\upharpoonright A$. $I\upharpoonright B$ would have to be nonmeager but then there would be an infinite set $D\subseteq C$ such that $\bigcup _{i\in D} Q_i \in I\upharpoonright A$, a contradiction.

It remains to prove that $\Phi (X)\approx _{\mathrm{sep}} X$ for any $X\in \mathfrak{M}$. But obviously if a meager ideal $I$ is compatible with a meager ideal $J$, then $\Phi (I)$ is compatible with $\Phi (J)$ and so also with $J$. Each equivalence class of meager ideals has its minimal element, the corresponding hereditary meager ideal.
\end{proof}
\begin{question}
We do not know whether the previous proposition also holds true for the class of null ideals. Let $I$ be a null ideal, is $\tilde{I}=\{A\subseteq \omega: I\upharpoonright A \text{ is not null}\}$ a (hereditary) null ideal?
\end{question}
It is easy to prove that $\tilde{I}=\{A\subseteq \omega: I\upharpoonright A \text{ is not null}\}$ is an ideal though. It is immediate that $\tilde{I}$ extends $I$. Let us check that it is downward closed. Let $A\in \tilde{I}$ and $B\subseteq A$. Suppose that $B\notin \tilde{I}$. Then $I\upharpoonright B$ is null. We use Proposition \ref{Bart_null} to obtain an infinite system $\{\mathcal{A}_n:n\in \omega\}$ witnessing it. It follows from that proposition that the same system would witness  that $I\upharpoonright A$ is null as well, which is a contradiction.

Let $A,B\in \tilde{I}$, we may assume they are disjoint. Then realize that $X\rightarrow (X\cap A,X\cap B)$ is a measure preserving homeomorphism from $A\cup B$ to $A\times B$ and it follows from the Fubini theorem that $I\upharpoonright A\times I\upharpoonright B$, so also $I\upharpoonright A\cup B$, is not null. Thus $\tilde{I}$ is closed under taking finite unions.
\begin{cor}
$(\mathfrak{M}, \supseteq )$ and $(\mathfrak{N}, \supseteq )$ are atomless, not separative, the separative quotient of $(\mathfrak{M}, \supseteq )$ is isomorphic to the ordering $(\mathfrak{M_H},\supseteq)$ of all hereditary meager ideals via the mapping $\Phi$.
\end{cor}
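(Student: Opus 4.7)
The plan is to verify the three assertions of the corollary --- atomlessness, failure of separativity, and the description of the separative quotient of $(\mathfrak{M},\supseteq)$ --- by combining the Talagrand/Bartosz\'ynski characterizations with the map $\Phi$ of the preceding proposition, pinned down at one point by the product-measure identity
\[
\mu_{2^\omega}(I) \;=\; \mu_{2^A}(I\upharpoonright A)\cdot\mu_{2^{A^c}}(I\upharpoonright A^c),
\]
which holds for any ideal $I$ on $\omega$ and any partition $\omega=A\sqcup A^c$; it follows from the product decomposition $2^\omega=2^A\times 2^{A^c}$ together with the fact that $Y\cup Z\in I$ iff $Y\in I$ and $Z\in I$ for disjoint $Y,Z$.

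For atomlessness the recipe is uniform: given $I$, pick $A$ with $A,A^c\in I^+$ and form the two incompatible extensions $I+A$ and $I+A^c$, whose join contains $\omega$. In $\mathfrak{M}$ I will take a Talagrand partition $(P_n)$ for $I$ from Proposition~\ref{Tal_meager}, set $A=\bigcup_n P_{2n}$, and witness meagerness of $I+A$ via the modified Talagrand partition $Q_n = P_{2n+1}\cup\{a_n\}$ with $\{a_n : n\in\omega\}$ enumerating $A$: the identity $Q_n\setminus A=P_{2n+1}$ reduces the condition $\bigcup_{n\in C}Q_n\in I+A$ to ``$C$ finite'' by Talagrand for $I$, and symmetrically for $I+A^c$. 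In $\mathfrak{N}$ the verification is direct from the product identity once $A$ is chosen with $A,A^c\in I^+$ and $\mu(I\upharpoonright A)=\mu(I\upharpoonright A^c)=0$; such $A$ may be arranged by splitting the Bartosz\'ynski system of $I$ (Proposition~\ref{Bart_null}) into two cofinal subsystems, so that $\mu(I+A)=\mu(I\upharpoonright A^c)=0$ and analogously for $I+A^c$.

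The separative-quotient description for $\mathfrak{M}$ is essentially a restatement of the preceding proposition: since $\Phi(I)\supseteq I$, $\Phi(I)\approx_\mathrm{sep}I$, and $\Phi$ is the identity on $\mathfrak{M_H}$, every $\approx_\mathrm{sep}$-equivalence class meets $\mathfrak{M_H}$ in the unique minimum $\Phi(I)$, and the induced map $[I]\mapsto\Phi(I)$ is an order-isomorphism onto $(\mathfrak{M_H},\supseteq)$. Non-separativity of $\mathfrak{M}$ is then an immediate corollary: for any non-hereditary meager $I$ --- and such ideals exist --- the pair $(I,\Phi(I))$ has $I\not\supseteq\Phi(I)$, while every meager $K\supseteq I$ is $\approx_\mathrm{sep}$-compatible with $\Phi(I)$, so no $K$ separates $I$ from $\Phi(I)$.

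The main obstacle I expect is non-separativity of $\mathfrak{N}$, where the analogue of $\Phi$ is not known to stay in $\mathfrak{N}$ (this is the open question recorded just before the corollary). My plan is to mimic the meager argument directly. Starting from a non-hereditary null ideal $I$ with a witnessing $X\in I^+$ for which $\mu_{2^X}(I\upharpoonright X)>0$, set $J=I+X$. The product identity applied to $I$ null forces $\mu(I\upharpoonright X^c)=0$, so $\mu(J)=\mu(I\upharpoonright X^c)=0$ and $J\in\mathfrak{N}$; clearly $I\subsetneq J$. For any null $K\supseteq I$, monotonicity gives $\mu(K\upharpoonright X)\geq\mu(I\upharpoonright X)>0$, and the product identity again forces $\mu(K\upharpoonright X^c)=0$, so $K+X=K+J$ is null and yields the required common extension, defeating separativity. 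The only loose end is the existence of a single non-hereditary null ideal, which one supplies by a direct construction.
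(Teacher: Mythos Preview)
Your overall strategy is sound and in several places more conceptual than the paper's (you use the map $\Phi$ and the product-measure identity systematically, whereas the paper produces a single explicit pair of ideals to witness non-separativity in both $\mathfrak{M}$ and $\mathfrak{N}$ simultaneously). The atomlessness arguments and the non-separativity argument for $\mathfrak{N}$ via the product identity are correct, modulo the loose end you acknowledge.

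There is, however, a genuine gap in your treatment of the separative quotient. You write that the isomorphism is ``essentially a restatement of the preceding proposition,'' but that proposition only gives $\Phi(I)\supseteq I$ and $\Phi(I)\approx_{\mathrm{sep}}I$. From this you get that every $\approx_{\mathrm{sep}}$-class \emph{contains} an element of $\mathfrak{M_H}$; you do \emph{not} get uniqueness, nor do you get that the quotient order on classes agrees with reverse inclusion on $\mathfrak{M_H}$. Both of these amount to the separativity of $(\mathfrak{M_H},\supseteq)$, and this needs its own argument. Concretely: given hereditary meager $I,J$ with $I\not\supseteq J$, you must produce a meager $K\supseteq I$ incompatible with $J$. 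The paper does exactly this --- take $A\in J\setminus I$, set $K=I+(\omega\setminus A)$, observe that $K$ is meager because $I\upharpoonright A$ is (this is where hereditariness of $I$ enters), and that any meager $L\supseteq K\cup J$ would have $\omega\setminus A\in L$ together with $L\upharpoonright A\supseteq J\upharpoonright A$ non-meager (hereditariness of $J$), forcing $L$ non-meager. Without this step your map $[I]\mapsto\Phi(I)$ is not even well-defined on equivalence classes, let alone an order-isomorphism.

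A smaller point: your atomlessness argument for $\mathfrak{N}$ via ``splitting the Bartoszy\'nski system into two cofinal subsystems'' is underspecified. Condition~(4) of Proposition~\ref{Bart_null} does not automatically pass to a subsystem restricted to $A$ or $A^c$, because for $X\subseteq A^c$ the witnesses from the $A$-side are vacuous. What you actually need is a set $A$ with $A,A^c\notin\tilde I$ (the null analogue of $\Phi(I)$), i.e.\ that $\tilde I$ is not maximal; this is how the paper's ``similar'' argument should be read, and it deserves a line of justification.
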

\begin{proof}
To prove they are atomless, let $I$ be an arbitrary meager ideal, let $A$ and $B$ be two infinite subsets of $\omega$ such that $A\cup B=\omega$ and neither $A$ nor $B$ is in $\Phi (I)$ ($\Phi (I)$ is meager, thus not maximal). Extend $I$ by $A$ and by $B$ to obtain two disjoint ideals $X_A$ and $X_B$ that are easily verified to be meager. The proof for null ideals is similar.

We claim they are not separative. Consider some maximal ideal $M$ on odd natural numbers and the ideal $F$ of finite sets on even numbers. Then let $I=\{A\cup B: A\in M \wedge B\in F\}$ and $J=\{A\cup B: A \text{ is an arbitrary subset of odd natural numbers }\wedge B\in F\}$ be two ideals, both easily verified to be meager and null. However, $I$ and $J$ are equivalent in the separative modification for both classes (meager and null) of ideals.\\
On the other hand, if $I$ and $J$ are two hereditary meager ideals such that $I\nsupseteq J$, then there is an infinite set $A\in J$ that is not in $I$. Let $K$ be an ideal generated by $I\cup \{\omega \setminus A \}$, it is clearly meager and $\Phi (K)\supseteq I$ is disjoint with $J$. 
 
It remains to show that $\Phi$ defines an isomorphism between the separative quotient of $(\mathfrak{M}, \supseteq )$ and an ordering $(\mathfrak{M_H}, \supseteq )$. But $\Phi$ obviously preserves the inclusion relation and the disjointness between ideals and from the previous proposition, $\Phi (I)\approx _{\mathrm{sep}} I$ for any $I\in \mathfrak{M}$, so we are done.
\end{proof}
Finally, we show there is no dense subset of these orderings with size $\mathfrak{c}$, thus preventing them to have the BT-property.
\begin{thm}
There are $2^{\mathfrak{c}}$ ideals that are both meager and null and that are mutually disjoint (in both $(\mathfrak{M},\supseteq)$ and $(\mathfrak{N},\supseteq)$).

In particular, neither $(\mathfrak{M},\supseteq)$ nor $(\mathfrak{N},\supseteq)$ has the BT-property.
\end{thm}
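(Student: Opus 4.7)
The plan is to exploit the $2^{\mathfrak c}$ non-principal ultrafilters on $\omega$ (Posp\'{\i}\v{s}il's theorem): since the dual ideals of ultrafilters are neither meager nor null, I will ``dilute'' each ultrafilter into a Fubini-style ideal on $\omega\times\omega$ that retains pairwise disjointness but becomes both meager and null.

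Fix a bijection $\omega\leftrightarrow\omega\times\omega$ and, for each non-principal ultrafilter $\mathcal{U}$ on $\omega$, define
\[
\mathcal{J}_{\mathcal{U}}=\{A\subseteq\omega\times\omega:\{n\in\omega:A_n\text{ is finite}\}\in\mathcal{U}\},
\]
where $A_n=\{m:(n,m)\in A\}$. That $\mathcal{J}_{\mathcal{U}}$ is a proper ideal containing the finite sets is immediate from the filter axioms and non-principality of $\mathcal{U}$.

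The main analytic step, and the one I expect to require the most care, is to show that each $\mathcal{J}_{\mathcal{U}}$ is both meager and null. The key observation is that non-principality forces $\mathcal{J}_{\mathcal{U}}\subseteq M:=\{A:A_n\text{ is finite for infinitely many }n\}$, since every element of $\mathcal{U}$ is infinite. Under the identification $2^{\omega\times\omega}\cong(2^{\omega})^{\omega}$ by rows, for each fixed $n$ the set $\{A:A_n\text{ is finite}\}$ is a cylinder over the collection of finite subsets of $\omega$, which is itself a meager and null subset of $2^{\omega}$; cylinders over meager (resp.\ null) sets are meager (resp.\ null), both classes form $\sigma$-ideals closed under subsets, and $M$ is contained in the countable union of these cylinders over $n\in\omega$, so $M$---and hence $\mathcal{J}_{\mathcal{U}}$---is meager and null.

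Pairwise disjointness is immediate from the disjointness of the underlying ultrafilters: if $\mathcal{U}\neq\mathcal{V}$ and $X\in\mathcal{U}\setminus\mathcal{V}$, then $(\omega\setminus X)\times\omega\in\mathcal{J}_{\mathcal{U}}$ (its $n$th section is $\omega$ for $n\notin X$ and $\emptyset$ for $n\in X$, so the set of finite-section indices is exactly $X\in\mathcal{U}$), while analogously $X\times\omega\in\mathcal{J}_{\mathcal{V}}$, and their union is $\omega\times\omega$; so no proper ideal extends both. This produces $2^{\mathfrak c}$ pairwise disjoint meager+null ideals, and the ``in particular'' then follows at once, since any dense subset of $(\mathfrak{M},\supseteq)$ or $(\mathfrak{N},\supseteq)$ must contain a distinct element above each $\mathcal{J}_{\mathcal{U}}$, forcing its cardinality to be at least $2^{\mathfrak c}>\mathfrak c$ and precluding the BT-property.
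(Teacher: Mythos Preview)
Your argument is correct. Both you and the paper start from the $2^{\mathfrak c}$ non-principal ultrafilters (equivalently, maximal ideals) on $\omega$ and ``dilute'' them into ideals that are meager and null while preserving incompatibility, but the implementations differ. The paper works on a triangular array: it fixes an interval partition $(I_n)_n$ of $\omega$ with $|I_n|=n+1$, sends $X\subseteq\omega$ to the set $X^I$ of all ``$m$-th positions'' of the $I_n$ for $m\in X$, and lets $\mathcal{I}_J$ be the ideal generated by $\{X^I:X\in J\}$ for a maximal ideal $J$. Meagerness and nullness are then verified through the Talagrand and Bartoszy\'nski combinatorial characterizations (Propositions~\ref{Tal_meager} and~\ref{Bart_null}). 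Your Fubini construction on $\omega\times\omega$ is conceptually the same transplant of an ultrafilter into a two-dimensional array, but your verification is more direct: the containment $\mathcal{J}_{\mathcal U}\subseteq\bigcup_n\{A:A_n\text{ finite}\}$ reduces everything to the elementary fact that cylinders over countable sets are meager and null, bypassing the combinatorial characterizations entirely. This makes your route slightly more self-contained; the paper's route, on the other hand, illustrates how those characterizations are used in practice. One minor slip: in the final paragraph you write that a dense set must contain an element \emph{above} each $\mathcal{J}_{\mathcal U}$; in the ordering $\supseteq$ this should read \emph{below} (i.e., an extension), though your conclusion is unaffected.
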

\begin{proof}
Let $(I_n)_{n\in \omega}$ be a partition of $\omega$ into intervals such that $|I_n|=n+1$. For any $X\subseteq \omega$ let $X^I$ be the set $$\bigcup _{m\in X} \{k\in \omega :\exists n (k\text{ is the }m\text{-th element of }I_n\}$$ It is clear that if $J$ is an ideal on $\omega$ then $\{X^I:X\in J\}$ is a base of an ideal; we shall denote this ideal as $\mathcal{I}_J$.

Now let $\mathcal{M}$ be the set of all maximal ideals on $\omega$, its size is $2^{\mathfrak{c}}$. For any $J\in \mathcal{M}$, we make an ideal $\mathcal{I}_J$ and obtain a system $\mathfrak{I}$ of $2^{\mathfrak{c}}$ ideals. The disjointness of two ideals $J_1, J_2\in \mathcal{M}$ is easily seen to be preserved for $\mathcal{I}_{J_1}$ and $\mathcal{I}_{J_2}$.
\\
\\
{\bf Claim 1} $\mathfrak{I}$ is a system of meager ideals.\\

The interval partition $(I_n)_{n\in \omega}$ works for all ideals from $\mathfrak{I}$. Assume that some set set $X\in \mathcal{I}_J$, where $\mathcal{I}_J \in \mathfrak{I}$, contains a union of infinitely many intervals. It is easy to check that once it contains the whole interval $I_n$ then it contains all previous intervals. Thus, we conclude that $X=\omega$ which is a contradiction.
\\
\\
{\bf Claim 2} $\mathfrak{I}$ is a system of null ideals.\\

We use characterization of null ideals from \ref{Bart_null}. For $m\leq n$, let $i^m_n$ be the $m$-th element of $I_n$. Let $A_n=\{a_n=\{i^n_m: n\leq m\leq 2n-1\}\}$. These sets satisfy the first three conditions from \ref{Bart_null}. Let $X\in \mathcal{I}_J$, where $\mathcal{I}_J \in \mathfrak{I}$, be a given set. $X\subseteq Y^I$ for some $Y\in J$. Then it is easy to check that $A_n\cap X=\emptyset$ for $n\in \omega \setminus Y$ and $|\omega\setminus Y|=\omega$; thus we also verified the last fourth condition and proved that every $\mathcal{I}_J$ is null.
\end{proof}

\section{Products}
If $P$ and $Q$ are two orderings with the BT-property then their product $P\times Q$ has again the BT-property and the height is less or equal to the minimum of heights of the original orderings. To see this, just realize that if $B$ is a regular subalgebra of $C$ then $\mathfrak{h}(B)\geq \mathfrak{h}(C)$, and that $\mathrm{RO}(P)$ is a regular subalgebra of $\mathrm{RO}(P\times Q)$. The same holds for countable products and iterations. Let us mention the case of products of $([\omega]^\omega,\subseteq^*)$. By $\mathfrak{h}_\alpha$, for $2\leq \alpha\leq \omega$ we denote $\mathfrak{h}(([\omega]^\omega,\subseteq^*)^\alpha)$. It immediately follows that $\mathfrak{t}(([\omega]^\omega,\subseteq^*)^\alpha)=\mathfrak{t}$ for any $2\leq\alpha\leq \omega$. Shelah and Spinas in \cite{ShSp} proved that consistently for any $n\in \omega$ $\mathfrak{h}_{n+1}<\mathfrak{h}_n$. However, the following question remains open.
\begin{question}
Does it hold in ZFC that $\mathfrak{h}_\omega=\min\{\mathfrak{h}_n:n\in \omega\}$?
\end{question}


\begin{thebibliography}{10}
\bibitem{BPS}
B. Balcar, J. Pelant, P. Simon, \emph{The space of ultrafilters on $\mathbb{N}$ covered by nowhere dense sets}, Fund. Math. 110 (1980), 11-24
\bibitem{BHH}
B. Balcar, F. Hernand\' ez-Hernand\' ez, M. Hru\v s\' ak, \emph{Combinatorics of dense subsets of rationals}, Fund. Math. 183(2004), 59-79
\bibitem{BH}
B. Balcar, M. Hru\v s\' ak, \emph{Distributivity of the algebra of regular open subsets of $\beta \mathbb{R} \setminus\mathbb{R} ^*$}, Topol. Appl. 149 (2005), 1-7
\bibitem{BJ}
T. Bartoszy\' nski, H. Judah, \emph{Set Theory: On the Structure of the Real Line}, A. K. Peters, Wellesley, 1995
\bibitem{BSc}
T. Bartoszy\' nski, M. Scheepers, \emph{Remarks on small sets related to trigonometric series}, Topol. Appl. 64 (1995), 133-140
\bibitem{Bl}
A. Blass, \emph{Combinatorial Cardinal Characteristics of the Continuum}, In: Handbook of Set Theory, Springer, Dordrecht, 2010
\bibitem{Br}
J. Brendle, \emph{Van Douwen's diagram for dense sets of rationals}, Ann. Pure Appl. Logic 143, Issues 1–3(2006), 54-69
\bibitem{Dord}
P. Dordal, \emph{A model in which the base matrix tree has no cofinal branches}, J. Symb. Logic 52 (1987), Issue 3, 651-664
\bibitem{Dow}
A. Dow, \emph{The regular open algebra of $\beta \mathbb{R}\setminus\mathbb{R}$ is not equal to the completion of $\mathcal{P}(\omega)/\mathrm{fin}$}, Fund. Math. 157 (1998), 33-41
\bibitem{Dow2}
A. Dow, \emph{Tree $\pi$-Bases for $\beta \mathbb{N}-\mathbb{N}$ in Various Models}, Topol. Appl. 33 (1989), 3-19
\bibitem{JuKr}
W. Just, A. Krawczyk, \emph{On certain Boolean algebras $\mathcal{P}(\omega)/I$}, Trans. Amer. Math. Soc. 285 (1984), Number 1, 411–429
\bibitem{Ko}
B. K\" onig, \emph{Dense subtrees in complete Boolean algebras}, Math. Logic Quart. 52 (2006), Issue 3, 283-287
\bibitem{Ku} K. Kunen, \emph{Set Theory: An Introduction to Independence Proofs}, North-Holland, 1980
\bibitem{LF}
C. Laflamme, \emph{Forcing with filters and complete combinatorics}, Ann. Pure Appl. Logic 42 (1989), Issue 2, 125-163
\bibitem{MaSh}
M. Malliaris, S. Shelah, \emph{Cofinality spectrum theorems in model theory, set theory and general topology}, preprint, http://arxiv.org/abs/1208.5424
\bibitem{ShSp}
S. Shelah, O. Spinas, \emph{The distributivity numbers of finite products of $P(omega) /fin$}, Fund. Math. 158 (1998), 81-93
\bibitem{Ve}
B. Veli\v ckovi\' c, \emph{Playful Boolean algebras}, Trans. Amer. Math. Soc. 296 (1986), Number 2, 727-740
\bibitem{Vo}
P. Vojt\' a\v s, \emph{Boolean isomorphism between  partial orderings of convergent and divergent series and infinite subsets of $\mathbb{N}$}, Proc. Amer. Math. Soc. 117 (1993), 235-242
\bibitem{Wil}
S.W. Williams, \emph{Trees, Gleason spaces, and Coabsolutes of $\beta \mathbb{N}-\mathbb{N}$}, Trans. Amer. Math. Soc. 271 (1982), Number 1, 83-100
\bibitem{Zap}
J. Zapletal, \emph{On the existence of a $\sigma$-closed dense subset}, Commentat. Math. Univ. Carol. 51 (2010), issue 3, 513-517
\end{thebibliography}
\end{document}